\date{}
\title{Letter graphs and geometric grid classes of permutations:  characterization and recognition
\footnote{An extended abstract of this paper was published in \cite{iwoca}.}} 
\author{Bogdan Alecu\thanks{Mathematics Institute, University of Warwick, Coventry CV4 7AL, UK.} 
\and Vadim Lozin\thanks{Mathematics Institute, University of Warwick, Coventry CV4 7AL, UK.} 
\and Dominique de Werra\thanks{EPFL, Institute of Mathematics, CH-1015 Lausanne, Switzerland.}
\and Viktor Zamaraev\thanks{Department of Computer Science, Durham University, South Road, Durham, DH1 3LE, UK.}}
\tikzstyle{vertex}=[circle,fill=black!100,text=white,inner sep=0.8mm]
\tikzstyle{point}=[circle,fill=black,inner sep=0.1mm]
\begin{document}
\maketitle

\newtheorem{proposition}{Proposition}
\newtheorem*{example}{Example}
\newtheorem{theorem}{Theorem}
\newtheorem{lemma}{Lemma}
\newtheorem{cor}{Corollary}
\theoremstyle{definition}
\newtheorem{definition}{Definition}
\newtheorem{remark}{Remark}
\newtheorem{conjecture}{Conjecture}

\begin{abstract}
In this paper, we reveal an intriguing relationship between two seemingly unrelated notions: letter graphs and geometric grid classes of permutations. 
An important property common for both of them is well-quasi-orderability, implying, in a non-constructive way, a polynomial-time recognition of 
geometric grid classes of permutations  and $k$-letter graphs for a fixed $k$. However, constructive algorithms are available only for $k=2$.
In this paper, we  present the first constructive polynomial-time algorithm for the recognition of $3$-letter graphs. It is based on a structural 
characterization of graphs in this class. 
\end{abstract}

\section{Introduction}

Letter graphs and geometric grid classes of permutations have been introduced independently of each other  in \cite{letter-graphs}  and \cite{grid-classes}, respectively. 
Nothing in the definitions of these notions suggests any connection between them.
We believe that  letter graphs and geometric grid classes of permutations can be connected through the notion of permutation graph. Speaking informally,
we believe that geometric grid classes of permutations and letter graphs are two languages describing the same concept in the universe of permutations and  
permutation graphs, respectively. We state this formally as a conjecture as follows:
\begin{conjecture}\label{con:main} 
Let $X$ be a class of permutations and ${\cal G}_X$ the corresponding class of permutation graphs.
Then $X$ is geometrically griddable if and only if ${\cal G}_X$ is a class of $k$-letter graphs for a finite value of $k$.
\end{conjecture}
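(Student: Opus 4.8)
The plan is to prove both implications by regarding geometric griddability and the $k$-letter property as two manifestations of one and the same word encoding, and by building an explicit dictionary between them. Recall that a permutation $\pi$ is drawn on a grid in which each nonempty cell carries a monotone segment, and that a graph is a $k$-letter graph precisely when its vertices can be linearly ordered and labelled with $k$ letters so that adjacency is decided by the ordered pair of labels. In both settings the combinatorial object is reconstructed from a word over a finite alphabet together with a local rule, so the natural strategy is to match the cell alphabet of a grid class with the letter alphabet of a lettering.

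For the forward implication, suppose $X=\mathrm{Geom}(M)$. I would take the alphabet to be the set of nonempty cells of $M$, a finite set depending only on $M$, and label each point of $\pi$ by the cell containing it. Two points in a common increasing cell never form an inversion and two points in a common decreasing cell always do, so each letter class induces a clique or an independent set. For two points in distinct cells $(i,j)$ and $(i',j')$ with $i\neq i'$ and $j\neq j'$, the disjointness of the column ranges and of the row ranges shows that adjacency is determined by the pair of cells alone. The only ambiguous pairs lie in a common row, where the inversion status is governed by the vertical order, or in a common column, where it is governed by the horizontal order. The key step is therefore to choose a single linear reading order $\prec$ on the points that agrees with the vertical order inside each row and with the horizontal order inside each column; the decoder then recovers adjacency for the ambiguous pairs from the ordered pair of cells. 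Here I would invoke the structure theory of \cite{grid-classes}: geometric griddability is equivalent to griddability by a matrix whose row--column incidence graph is a forest, and this acyclicity is exactly what guarantees that the two families of local constraints can be merged into a single linear order without creating a cycle. The resulting labelling exhibits every graph of $\mathcal{G}_X$ as a $k$-letter graph with $k$ bounded by the number of cells of $M$.

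For the converse, fix a lettering with $k$ letters of each graph in $\mathcal{G}_X$. Since any two vertices carrying the same letter are adjacent or non-adjacent according solely to that letter, each letter class is homogeneous, hence induces a clique or an independent set, and so corresponds to a decreasing or an increasing subsequence of the underlying permutation. Thus every $\pi\in X$ decomposes into at most $k$ monotone subsequences with a uniform global pattern, which I would first turn into monotone griddability of $X$. The genuine difficulty is the upgrade from monotone to \emph{geometric} griddability: a bounded monotone decomposition only yields a grid whose incidence graph may contain cycles, and such cyclic grid classes, the smallest being the all-ones $2\times 2$ grid, are monotone but not geometrically griddable. To close the gap I would show that the permutation graphs arising from any such cyclic grid class have unbounded lettericity, so that the hypothesis of a uniform bound $k$ forces $X$ to avoid these obstructions and, by the forest characterization of \cite{grid-classes}, to be geometrically griddable.

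I expect this last step to be the main obstacle. Proving a lower bound on the lettericity of the permutation graphs of a cyclic monotone grid class, and, more importantly, showing that avoiding all such obstructions is not merely necessary but also sufficient for geometric griddability, requires controlling the interaction between the non-canonical word order of a lettering and the geometry of the permutation; it is precisely this interaction that the conjecture asserts to be rigid. The forward direction, by contrast, reduces to the acyclicity argument sketched above and should go through in full.
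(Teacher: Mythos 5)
You should first be clear about the status of what you are proving: the statement is Conjecture~\ref{con:main}, which the paper itself does \emph{not} prove. The paper establishes only two fragments --- the ``only if'' direction (Theorem~\ref{thm-8-3-8}) and the ``if'' direction for $k=2$ --- and your proposal, by your own admission, also leaves the ``if'' direction open in general. So the honest comparison is between your two sketches and the paper's two partial results, and both of your sketches have concrete defects. In the forward direction, you rest the whole argument on the claim that ``geometric griddability is equivalent to griddability by a matrix whose row--column incidence graph is a forest'' and that this acyclicity is what lets the within-row and within-column order constraints merge into one linear order. That misquotes the theorem of \cite{grid-classes}: what is proved there is that a \emph{monotone} grid class ${\rm Grid}(M)$ is geometrically griddable if and only if the cell graph of $M$ is a forest. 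This gives you nothing here, because the classes you must handle are ${\rm Geom}(M)$ for arbitrary $M$, including matrices with cyclic cell graphs (the all-ones $2\times 2$ matrix already), which are geometric grid classes by definition; for them there is no forest to hang your merging argument on. The paper's actual device is different and does not need acyclicity: every geometric grid class is the geometric grid class of a \emph{partial multiplication matrix}, whose column and row signs provide globally consistent directions; ordering the points of a drawing by their distances from the base points of their cells then produces the single linear order, and the decoder is read off from the pairs of cells and the signs.

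In the reverse direction there is a gap beyond the one you acknowledge. From the fact that each letter class is a clique or an independent set you conclude that every $\pi\in X$ splits into at most $k$ monotone subsequences, and you propose to ``first turn this into monotone griddability of $X$.'' That inference is false: the class of all permutations that are unions of two increasing subsequences contains arbitrarily long direct sums of $21$, hence is not monotone griddable, even though every member decomposes into two monotone subsequences. What actually forces griddability is the half of the letter-graph structure your sketch discards, namely condition 2 of Theorem~\ref{thm:k-letter}: between any two letter classes the edge set must be of one of the four order-compatible types, and this is incompatible with an induced matching of size $2$ between two parts, so a bounded-lettericity class cannot contain the graphs $mK_2$ (nor, by complementation, $\overline{mK_2}$) for large $m$; only from such lower bounds can one exclude long sums of $21$ and skew sums of $12$ and then apply a monotone-griddability criterion. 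Finally, note that even granting this repair, your plan does not recover the one nontrivial case the paper does settle ($k=2$), which is proved there by a decoder-by-decoder analysis matched with known pattern-avoidance characterizations (threshold permutations and the $\times$-figure, and chain graphs via the decomposition of the $321,2143$-avoiders); any credible attack on the full conjecture should at least reproduce that case.
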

In this conjecture, the parameter $k$ stands for the size of the alphabet used to describe graphs by means of letters (all definitions will be given in Section~\ref{sec:pre}).
In Section~\ref{sec:main}, we verify this conjecture in two cases: first, in Section~\ref{sec:from}, we prove the ``only if'' part of the conjecture, 
i.e. we translate the concept of geometric grid classes of permutations to the language of letter graphs, and then in Section~\ref{sec:back} we prove the conjecture in the reverse direction for $k=2$.

\medskip
An important property common for both of these notions is well-quasi-orderability.
It implies, in particular, that geometric grid classes of permutations and $k$-letter graphs (for a fixed $k$) can 
be described by finitely many forbidden obstructions. This proves, in a non-constructive way, that geometric grid classes of permutations  and $k$-letter graphs (for a fixed $k$)
can be recognized in polynomial time. However, constructive algorithms are not available for the recognition problem, except 
for the $2$-letter graphs and corresponding classes of permutations. As a step towards solving this problem for larger values of $k$, in Section~\ref{sec:3letter} 
we study the class of $3$-letter graphs. We provide a structural characterization of graphs in this class, which leads to 
a polynomial-time algorithm to recognize them. 
    
We finish the paper in Section~\ref{sec:conclusion} by positioning graph lettericity in the hierarchy of other graph parameters and discussing open problems in the area.     
All preliminary information related to the topic of the paper can be found in Section~\ref{sec:pre}.    


\section{Preliminaries}
\label{sec:pre}


All graphs in this paper are finite, undirected, without loops and multiple edges. The vertex set and the edge set of a graph $G$
are denoted by $V(G)$ and $E(G)$, respectively. For a vertex $x\in V(G)$ we denote by $N(x)$ the neighbourhood of $x$, i.e. the set of 
vertices of $G$ adjacent to $x$. A subgraph of $G$ induced by a subset of vertices $U\subseteq V(G)$ is denoted $G[U]$.
By $\overline{G}$ we denote the complement of $G$.

A {\it clique} in a graph is a subset of pairwise adjacent vertices and an {\it independent set} is a subset of pairwise non-adjacent vertices. 
A graph $G$ is {\it bipartite} if $V(G)$ can be partitioned into two independent sets and $G$ is {\it split} if  $V(G)$ can be partitioned into
an independent set and a clique. 

By $K_n$, $C_n$ and $P_n$ we denote the complete graph, the chordless cycle and the chordless path with $n$ vertices, respectively.
Also, $G+H$ denotes the disjoint union of two graphs $G$ and $H$. In particular, $pG$ is the disjoint union of $p$ copies of $G$. 

In the rest of this section we introduce some classes of graphs and permutations relevant to the topic of the paper. 

\subsection{Chain graphs} 

A graph $G$ is a chain graph if it is bipartite and admits a bipartition $V(G)=V_1\cup V_2$ such that for any 
two vertices $x,y$ in the same part $V_{i}$ either $N(x)\subseteq N(y)$ or $N(y)\subseteq N(x)$. In other words, the vertices in each part 
of the bipartition of $G$ can be linearly ordered under inclusion of their neighbourhoods, i.e. they form a chain. In terms of minimal forbidden induced subgraphs,
the chain graphs are precisely the $2K_2$-free bipartite graphs. Figure~\ref{fig:Z5} represents an example of a chain graph $Z_5$ containing 5 vertices in 
each part. In an obvious way this example can be extended to $Z_n$ for any value of $n$. The importance of this graph is due to its universality:
$Z_n$ contains all $n$-vertex chain graphs as induced subgraphs \cite{chain-uni}. 

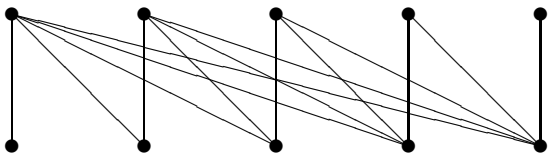
\begin{figure}[ht]
\begin{center}
\begin{picture}(300,60)
\put(50,0){\circle*{5}}
\put(100,0){\circle*{5}}
\put(150,0){\circle*{5}}
\put(200,0){\circle*{5}}
\put(250,0){\circle*{5}}



\put(50,50){\circle*{5}}
\put(100,50){\circle*{5}}
\put(150,50){\circle*{5}}
\put(200,50){\circle*{5}}
\put(250,50){\circle*{5}}

\put(50,0){\line(0,1){50}}
\put(250,0){\line(-1,1){50}}
\put(250,0){\line(-2,1){100}}
\put(250,0){\line(-3,1){150}}
\put(250,0){\line(-4,1){200}}
\put(100,0){\line(0,1){50}}
\put(200,0){\line(-1,1){50}}
\put(200,0){\line(-2,1){100}}
\put(200,0){\line(-3,1){150}}
\put(150,0){\line(0,1){50}}
\put(150,0){\line(-1,1){50}}
\put(150,0){\line(-2,1){100}}
\put(200,0){\line(0,1){50}}
\put(100,0){\line(-1,1){50}}
\put(250,0){\line(0,1){50}}

\end{picture}
\end{center}
\caption{The graph $Z_5$}
\label{fig:Z5}
\end{figure}

\subsection{Threshold graphs} 

The class of threshold graphs was introduced in \cite{threshold-graphs}, where it was characterized in terms of minimal forbidden induced subgraphs as follows:
a graph $G$ is threshold if and only if it is $(P_4,C_4,2K_2)$-free. This class is closely related to the class 
of chain graphs in the sense that if we create a clique in one of the parts of a chain graph, then the graph transforms  
into a threshold graph and vice versa. Moreover, by transforming $Z_n$ in this way we obtain 
an $n$-universal threshold graph, i.e. a threshold graph containing all $n$-vertex threshold graphs as 
induced subgraphs, see e.g. \cite{universal-threshold}. More about threshold graphs can be found in the book \cite{threshold}
devoted to this class.

\subsection{Permutation graphs} 

Let $\pi$ be a permutation of the set $\{1,2,\ldots,n\}$. The permutation graph $G_{\pi}$ of this permutation has
$\{1,2,\ldots,n\}$ as its vertex set with $i$ and $j$ being adjacent if and only if $(i-j)(\pi(i)-\pi(j))<0$. 
A graph $G$ is a {\it permutation graph} if there is a permutation $\pi$ such that $G$ is isomorphic to $G_{\pi}$. 

Alternatively, a permutation graph can be defined as the intersection graph of line segments between two parallel lines: 
each segment represents a vertex and two vertices are adjacent if and only if the corresponding segments cross each other.
For instance, Figure~\ref{fig:pi} represents the permutation 415263 written at the bottom of the diagram. It is not difficult 
to see that the permutation graph of this permutation is the chain graph $Z_3$. Clearly, this example can be extended 
to a diagram representing $Z_n$ for any value of $n$. Therefore, all chain graphs are permutation graphs. It is also known
that all threshold (and more generally, all $P_4$-free) graphs are permutation graphs. It is an interesting exercise 
(left to the reader) to construct a (diagram of) permutation representing the universal threshold graph.

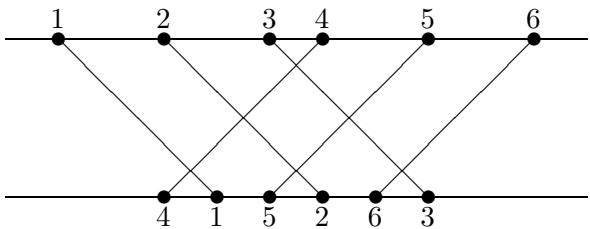
\begin{figure}[ht]
\begin{center}
\begin{picture}(220,90)
\put(40,0){\circle*{5}} 
\put(60,0){\circle*{5}}
\put(80,0){\circle*{5}} 
\put(100,0){\circle*{5}}
\put(120,0){\circle*{5}} 
\put(140,0){\circle*{5}}

\put(0,60){\circle*{5}} 
\put(40,60){\circle*{5}} 
\put(80,60){\circle*{5}} 
\put(100,60){\circle*{5}}
\put(140,60){\circle*{5}}
\put(180,60){\circle*{5}}

\put(-20,0){\line(1,0){220}} 
\put(-20,60){\line(1,0){220}} 
\put(40,0){\line(1,1){60}} 
\put(60,0){\line(-1,1){60}}
\put(80,0){\line(1,1){60}} 
\put(100,0){\line(-1,1){60}}
\put(120,0){\line(1,1){60}} 
\put(140,0){\line(-1,1){60}}
\put(-3,64){$1$}
\put(37,-11){$4$} 
\put(57,-11){$1$}
\put(37,64){$2$}
\put(77,-11){$5$} 
\put(97,-11){$2$}
\put(77,64){$3$}
\put(97,64){$4$}

\put(117,-11){$6$} 
\put(137,-11){$3$}
\put(137,64){$5$}
\put(177,64){$6$}
\end{picture}
\end{center}
\caption{A diagram representing the permutation 415263} 
\label{fig:pi}
\end{figure}


\subsection{Letter graphs}
\label{sec:letter-graphs}

 
Let $\Sigma$ be a finite alphabet and ${\cal P}\subseteq \Sigma^2$ a set of ordered pairs of symbols from $\Sigma$, called the {\it decoder}.
To each word $w=w_1w_2\cdots w_n$ with $w_i\in \Sigma$ we associate a graph $G({\cal P},w)$, called the {\it letter graph}
of $w$, by defining $V(G({\cal P},w))=\{1,2,\ldots ,n\}$ with $i$ being adjacent to $j>i$ if and only if 
the ordered pair $(w_i,w_j)$ belongs to the decoder $\cal P$.

It is not difficult to see that every graph $G$ is a letter graph in an  alphabet of size at most $|V(G)|$ with an appropriate decoder $\cal P$.
The minimum $\ell$ such that $G$ is a letter graph in an alphabet of $\ell$ letters is the {\it lettericity} of $G$ and is denoted
$\ell(G)$. A graph is a $k$-letter graph if its lettericity is at most $k$.

The notion of $k$-letter graphs was introduced in \cite{letter-graphs} and in the same paper the author characterized 
$k$-letter graphs as follows.

\begin{theorem}\label{thm:k-letter} A graph $G$ is a $k$-letter graph if and only if 
\begin{itemize}
\item[$1.$] there is a partition $V_1,V_2,\ldots,V_p$ of $V(G)$ with $p\le k$ such that each $V_i$ is either a clique or an independent set
in $G$, and
\item[$2.$] there is a linear ordering $L$ of $V(G)$ such that for each pair of distinct indices $1\le i,j\le p$, the intersection of 
$E(G)$ with $V_i\times V_j$ is one of the following four types (where $L$ is considered as a binary relation, i.e. as a set of pairs):
\begin{itemize}
\item[{\rm (}a{\rm )}] $L\cap (V_i\times V_j)$;
\item[{\rm (}b{\rm )}] $L^{-1}\cap (V_i\times V_j)$;
\item[{\rm (}c{\rm )}] $V_i\times V_j$;
\item[{\rm (}d{\rm )}] $\emptyset$.
\end{itemize}  
\end{itemize}
\end{theorem}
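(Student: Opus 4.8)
The plan is to prove both implications by exhibiting a single direct translation between a letter-graph encoding $(\Sigma,{\cal P},w)$ on one side and the pair (partition, linear order) on the other; the two directions then amount to reading the same correspondence forwards and backwards.

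For the ``only if'' direction, suppose $G=G({\cal P},w)$ with $w=w_1w_2\cdots w_n$ over an alphabet $\Sigma$ of size at most $k$. I would take $L$ to be the natural position order $1<2<\cdots<n$ of the vertices, and define the parts by letter, $V_a=\{i : w_i=a\}$ for each letter $a$ actually occurring in $w$, which yields $p\le k$ nonempty parts. For two vertices $i<j$ in a common part $V_a$, adjacency is decided by whether $(a,a)\in{\cal P}$, so $V_a$ is a clique if $(a,a)\in{\cal P}$ and an independent set otherwise, giving condition~$1$. For condition~$2$, I would examine, for distinct letters $a,b$, the four possibilities for membership of $(a,b)$ and $(b,a)$ in ${\cal P}$: both present forces all edges (type~(c)); neither present forces no edges (type~(d)); and each one-sided case forces precisely the pairs oriented one way by $L$, i.e.\ type~(a) when $(a,b)\in{\cal P}$ and $(b,a)\notin{\cal P}$, and type~(b) in the opposite case.

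For the ``if'' direction I would run this correspondence in reverse. Given the partition and the order $L$, set $\Sigma=\{1,\ldots,p\}$ (so $|\Sigma|=p\le k$), list the vertices as $v_1<_L\cdots<_L v_n$, and let $w_i$ be the index of the part containing $v_i$. The decoder is then defined to match the cases above: $(\alpha,\alpha)\in{\cal P}$ iff $V_\alpha$ is a clique; and for $\alpha\neq\beta$, both ordered pairs go into ${\cal P}$ in the type~(c) case, neither in the type~(d) case, and exactly $(\alpha,\beta)$ (respectively $(\beta,\alpha)$) in the type~(a) (respectively type~(b)) case. It remains to verify that $G({\cal P},w)=G$, which I would do by checking adjacency of $v_i,v_j$ with $i<j$ in each configuration of the parts containing them, confirming that the within-part and between-part adjacencies reproduce exactly the edges prescribed by conditions~$1$ and~$2$.

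The routine-but-delicate core, and the step where I expect to have to be most careful, is the bookkeeping of orientations in condition~$2$: a letter graph reads the ordered pair $(w_i,w_j)$ according to \emph{position} order, whereas $E(G)\cap(V_\alpha\times V_\beta)$ is ordered by \emph{part} membership, so one must track simultaneously whether a vertex lies in $V_\alpha$ or in $V_\beta$ and whether it comes earlier or later in $L$. Matching ``$(\alpha,\beta)\in{\cal P}$ but $(\beta,\alpha)\notin{\cal P}$'' with type~(a)$\,=L\cap(V_\alpha\times V_\beta)$ rather than with type~(b) is the one place where a sign slip would break the argument; everything else reduces to a finite case check. No structural difficulty beyond this is anticipated, since the partition-by-letters and order-by-position constructions are forced by the two definitions.
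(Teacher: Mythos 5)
Your proposal is correct: the letter-by-letter partition with the position order, and its reverse translation into a decoder, is exactly the standard argument, and your case analysis of which of $(a,b),(b,a)$ lie in $\mathcal{P}$ correctly matches the four types (a)--(d), including the orientation bookkeeping you flag as delicate. Note that the paper itself offers no proof to compare against -- Theorem~\ref{thm:k-letter} is quoted from Petkov\v sek's paper \cite{letter-graphs} as a known result -- but your argument is essentially the proof given there, and the only unaddressed subtlety (when a pair of parts satisfies more than one type simultaneously, e.g.\ all of $V_\beta$ precedes all of $V_\alpha$ in $L$ and there are no edges between them) is harmless, since any of the admissible decoder choices reconstructs the same graph.
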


\medskip
\noindent
{\it Example}.  Consider the alphabet $\Sigma=\{a,b\}$ and decoder ${\cal P}=\{(a,b)\}$. It is not difficult to see that 
for any word $w$ the graph $G({\cal P},w)$ is a chain graph. Indeed, in this graph the $a$ vertices (i.e. the vertices labelled by $a$)
form an independent set and the $b$ vertices form an independent set. Besides, each of these two sets forms a chain defined by the order in
which the vertices appear in the word. Also, it is not difficult to see that the periodic word $abab\ldots abab$ of length $2n$ defines 
the chain graph $Z_n$. This observation provides an alternative proof of the universality of $Z_n$, since every 
word of length $n$ is a subword of $abab\ldots abab$. 
If we add to the decoder the pair $(a,a)$, the graph $Z_n$ transforms into the $n$-universal threshold graph. 
Therefore, all chain graphs and all threshold graphs have lettericity at most 2.

\medskip
The notion of letter graphs is of interest for various reasons. First, some important graph classes, such as chain graphs or threshold graphs,
can be described in the terminology of letter graphs.  
Second, letter graphs provide an interesting contribution to the theory of ordered graphs, i.e. graphs given together with 
a linear order of its vertices; for more information on this notion see e.g. \cite{ordered}.
Third, graph lettericity contributes to the rich theory of graph parameters. We discuss this topic in Section~\ref{sec:conclusion}. 
Finally, and perhaps most importantly, graphs of bounded lettericity are well-quasi-ordered by the induced subgraph relation \cite{letter-graphs}. 
This is a rare property of graphs, which was shown, up to date, only for  some restricted graph classes, see e.g. \cite{Damaschke,two}. 

\subsection{Geometric grid classes of permutations}

The notion of geometric grid classes of permutations was introduced in \cite{grid-classes} as follows. Suppose that $M$ is a $0/\pm 1$ matrix. 
The {\it standard figure} of $M$ is the set of points in $\mathbb{R}^2$ consisting of 
\begin{itemize}
\item the increasing open line segment from $(k-1,\ell -1)$ to $(k,\ell)$ if $M_{k,\ell}=1$ or
\item the decreasing open line segment from $(k-1,\ell )$ to $(k,\ell-1)$ if $M_{k,\ell}=-1$.
\end{itemize}

We index matrices first by column, counting left to right, and then by row, counting bottom to top. 
The {\it geometric grid class} of $M$, denoted by ${\rm Geom}(M)$, is then the set of all permutations that can be drawn
on this figure in the following manner. Choose $n$ points in the figure, no two on a common
horizontal or vertical line. Then label the points from $1$ to $n$ from bottom to top and record
these labels reading left to right. Figure~\ref{fig:grids} represents two permutations that lie, respectively, 
in grid classes of 
$$\left(
\begin{array}{rr}
  1 & -1 \\
  -1 & 1 
\end{array}
\right)
\mbox{ and } 
\left(
\begin{array}{rr}
   -1 & 1 \\
   1 & -1 
 \end{array}\right).
$$

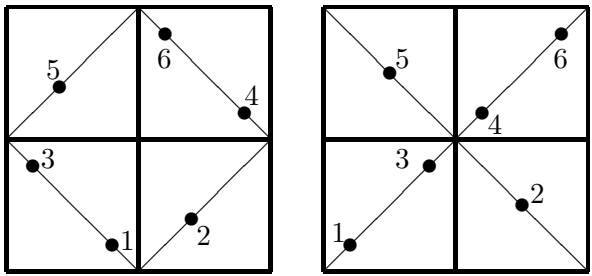
\begin{figure}[ht]
\begin{center}
\begin{picture}(220,100)

\put(0,50){\line(1,1){50}} 
\put(0,50){\line(1,-1){50}}
\put(100,50){\line(-1,1){50}} 
\put(100,50){\line(-1,-1){50}}

\put(120,0){\line(1,1){100}} 
\put(220,0){\line(-1,1){100}}

\put(40,10){\circle*{5}} \put(43,8){$1$}
\put(70,20){\circle*{5}} \put(72,10){$2$}
\put(10,40){\circle*{5}} \put(13,39){$3$}
\put(90,60){\circle*{5}} \put(90,63){$4$}
\put(20,70){\circle*{5}} \put(15,73){$5$}
\put(60,90){\circle*{5}} \put(57,77){$6$}

\put(130,10){\circle*{5}} \put(123,11){$1$}
\put(195,25){\circle*{5}} \put(198,26){$2$}
\put(160,40){\circle*{5}} \put(147,39){$3$}
\put(180,60){\circle*{5}} \put(182,52){$4$}
\put(145,75){\circle*{5}} \put(147,77){$5$}
\put(210,90){\circle*{5}} \put(207,77){$6$}

\linethickness{0.5mm}
\put(0,0){\line(1,0){100}} 
\put(0,0){\line(0,1){100}}
\put(100,100){\line(0,-1){100}} 
\put(100,100){\line(-1,0){100}}

\put(120,0){\line(0,1){100}} 
\put(120,0){\line(1,0){100}}
\put(220,100){\line(0,-1){100}} 
\put(220,100){\line(-1,0){100}}

\put(0,50){\line(1,0){100}} 
\put(50,0){\line(0,1){100}}
\put(120,50){\line(1,0){100}} 
\put(170,0){\line(0,1){100}}

\end{picture}
\caption{The permutation $351624$ on the left and the permutation $153426$ on the right.}
\label{fig:grids}
\end{center}
\end{figure}

We will say that a permutation class is {\it geometrically griddable} if it is contained in the union of finitely many geometric
grid classes. The geometrically griddable classes of permutations enjoy many nice properties. In particular, 
in~\cite{grid-classes} the following result has been proved.

\begin{theorem}\label{thm:grid}
Every geometrically griddable class of permutations is well-quasi-ordered and is in bijection with a regular language.
\end{theorem}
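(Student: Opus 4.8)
The plan is to encode the permutations of a single geometric grid class ${\rm Geom}(M)$ as words over a finite alphabet, to show that the resulting set of words is a regular language, and then to read off both conclusions at once: the bijection is then immediate, and well-quasi-orderability follows from Higman's lemma as soon as the encoding is shown to be order-preserving. The passage from a single class ${\rm Geom}(M)$ to an arbitrary geometrically griddable class $X$ is handled afterwards by standard arguments: $X$ is a subclass of a finite union of geometric grid classes, so it is well-quasi-ordered (any subclass of a well-quasi-ordered class is), hence finitely based; and its encoding is obtained from the (regular) encoding of the ambient union by forbidding the finitely many words that encode its basis elements, which preserves regularity.

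First I would set up the encoding. Assign to each nonzero cell of $M$, i.e. each pair $(k,\ell)$ with $M_{k,\ell}\neq 0$, a distinct letter, yielding a finite alphabet $\Sigma$ whose size is the number of nonzero entries of $M$. Given a permutation $\pi\in{\rm Geom}(M)$ together with a placement of its points on the standard figure, I read the points in increasing order of $x$-coordinate and record, for each point, the letter of the cell containing it; this produces a word $w(\pi)\in\Sigma^*$. The key rigidity of \emph{geometric} (as opposed to arbitrary monotone) grid classes is that all points assigned to a fixed cell lie on a single straight segment, so within a cell the left-to-right order coincides with (for a $+1$ cell) or reverses (for a $-1$ cell) the bottom-to-top order. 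Consequently the relative vertical position of two points lying in a common cell is determined by their order in the word, and for points in distinct cells it is governed by the row and column indices of those cells together with an orientation of $M$ to be fixed below.

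Next I would turn the encoding into an order-preserving bijection. Two difficulties must be resolved: a permutation can admit several placements, and a word need not describe a consistent placement. I would address the first by selecting for each $\pi$ a canonical placement (for instance a leftmost/greedy gridding), and the second by reducing to a matrix whose cell graph, with a vertex per nonzero cell and edges joining cells that share a row or a column, is a forest. One shows that refining $M$ by subdividing rows and columns leaves ${\rm Geom}(M)$ unchanged but renders the cell graph acyclic, and acyclicity furnishes a consistent orientation that pins down a unique canonical placement. With canonical placements fixed, the set $L=\{w(\pi):\pi\in{\rm Geom}(M)\}$ is recognised by a finite automaton: the admissibility constraints, namely that the letters occurring in each column appear in the forced order and that the orientations around the (tree) cell graph are consistent, are all \emph{local} along the reading order and can therefore be verified with finite memory. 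Hence $L$ is regular and $\pi\mapsto w(\pi)$ is a bijection onto $L$, which is the second assertion of the theorem.

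Finally, well-quasi-orderability. Higman's lemma guarantees that $\Sigma^*$, and therefore the regular language $L\subseteq\Sigma^*$, is well-quasi-ordered by the subword (subsequence) embedding order. The theorem follows provided the encoding is order-preserving, i.e. that whenever $\sigma$ is a pattern of $\pi$ the word $w(\sigma)$ embeds as a subword of $w(\pi)$; an infinite antichain of permutations would otherwise map to an infinite antichain of words, contradicting Higman. I expect this order-preservation to be the main obstacle, and the reason the forest/consistent-orientation reduction is really needed: for a general matrix, deleting points from a canonically gridded $\pi$ can force a \emph{different} canonical gridding on the sub-permutation $\sigma$, so that $w(\sigma)$ need not literally be a subword of $w(\pi)$. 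The acyclic cell structure is exactly what makes canonical griddings stable under deletion of points, so that the restriction of a placement is again canonical and the induced word is a genuine subword. Once this stability is in place, both conclusions drop out simultaneously.
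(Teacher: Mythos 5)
Your overall skeleton --- a cell alphabet, encoding permutations by words, Higman's lemma, a regular set of representatives, and the passage from a single class ${\rm Geom}(M)$ to an arbitrary griddable class via a finite basis --- is the same as in the proof that this paper quotes from \cite{grid-classes} and outlines in Section~\ref{sec:from}. However, the step your argument actually rests on is false: you claim that refining $M$ by subdividing rows and columns leaves ${\rm Geom}(M)$ unchanged while making the cell graph acyclic. Take $M$ to be the $2\times 2$ all-ones matrix, or the $\times$-matrix of Figure~\ref{fig:grids}; its four nonzero cells form a $4$-cycle in the cell graph. Any subdivision of rows and columns that preserves the standard figure (and hence the class) replaces each cell's segment by a staircase of nonzero sub-cells meeting \emph{every} sub-column of its column and \emph{every} sub-row of its row; consequently, two original cells sharing a column (or row) always produce refined cells sharing a column (or row), and the $4$-cycle survives every refinement. (Moving cells into genuinely new rows or columns is not an option either, since that destroys the coupling between cells and changes the class.) It is moreover doubtful that the goal is achievable by any means: by a theorem of \cite{grid-classes}, ${\rm Geom}(N)={\rm Grid}(N)$ whenever the cell graph of $N$ is a forest, so a forest re-gridding of every geometric grid class would exhibit every geometric grid class as a monotone grid class, which nothing in your argument supports. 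Since your canonical placements, your finite-automaton argument, and --- as you yourself point out --- the stability of canonical griddings under deletion of points all hinge on acyclicity, this is a genuine gap at the core of the proof, not a repairable detail.

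It is worth seeing how the cited proof avoids the issue, because it sidesteps exactly the obstacle you identified. First, one passes to a \emph{partial multiplication matrix} defining the same geometric grid class (the Proposition in Section~\ref{sec:from}); its column and row signs supply the consistent orientations you wanted to extract from a forest, and this reduction is always possible, cycles or not. Second, instead of choosing a canonical word $w(\pi)$ for each permutation, one studies the map $\phi$ going the \emph{other} way, from words over the cell alphabet onto permutations. In that direction order-preservation is immediate --- deleting letters from a word deletes points from the figure, so subwords map to patterns --- and Higman's lemma applied to a surjective order-preserving map yields well-quasi-order with no canonical griddings needed at all; your worry that deleting points forces a different canonical gridding simply never arises. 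The bijection is then handled separately: two words encode the same permutation exactly when they differ by interchanging adjacent letters lying in independent cells, so the fibres of $\phi$ are traces of a trace monoid, and one invokes the existence of a regular cross-section of a trace monoid \cite{Diekert}. Finally, a smaller imprecision in your last step: to carve a griddable subclass $X$ out of the regular language, one must exclude all words encoding permutations that \emph{contain} some basis element of $X$ --- an infinite but regular set, which requires its own argument --- not merely the finitely many words that encode the basis elements themselves.
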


To define the pattern containment relation, we observe that the intersection diagram representing  a permutation (see e.g. Figure~\ref{fig:pi})
uniquely defines the permutation without the labels attached to the segments. Then a permutation $\pi$ is said to contain a permutation $\rho$ if
the intersection diagram representing $\rho$ can obtained from the  diagram representing $\pi$ by deleting some segments. 


\section{Letter graphs and geometric grid classes of permutations}
\label{sec:main}


In this section we verify Conjecture~\ref{con:main} in two cases: 
the ``only if'' direction'' (Section~\ref{sec:from}) and the case $k=2$ of the ``if'' direction'' 
(Section~\ref{sec:back}).


\subsection{From geometric grid classes of permutations to letter graphs}
\label{sec:from}


The goal of this section is to prove the following result. 

\begin{theorem}\label{thm-8-3-8} 
Let $X$ be a class of permutations and ${\cal G}_X$ the corresponding class of permutation graphs.
If $X$ is a geometric grid class, then ${\cal G}_X$ is a class of $k$-letter graphs for a finite value of $k$.
\end{theorem}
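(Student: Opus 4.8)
The plan is to apply the structural characterization of $k$-letter graphs in Theorem~\ref{thm:k-letter} directly. Fix $X={\rm Geom}(M)$ for a $0/\pm1$ matrix $M$. Any $G\in{\cal G}_X$ is the permutation graph of some $\pi\in X$, and I may fix a drawing of $\pi$ on the standard figure of $M$; thus $V(G)$ is a set of points of that figure, no two sharing a horizontal or a vertical line. The partition to try is the one induced by the cells: for each cell I let $V_{k,\ell}$ be the set of chosen points lying on the open segment of cell $(k,\ell)$. I will show that this partition, together with a suitable linear order, satisfies both conditions of Theorem~\ref{thm:k-letter}, with the number of parts bounded by the number of non-zero cells, a constant $k=k(M)$. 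This gives $\ell(G)\le k$ for all $G\in{\cal G}_X$, which is exactly the assertion that ${\cal G}_X$ is a class of $k$-letter graphs.

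First I would establish condition $1$. Two points on a single \emph{increasing} segment have their $x$- and $y$-coordinates in the same relative order, hence the corresponding pair is non-adjacent in the permutation graph; so $V_{k,\ell}$ is an independent set whenever $M_{k,\ell}=1$. Dually, two points on a \emph{decreasing} segment disagree in $x$- and $y$-order and are therefore adjacent, so $V_{k,\ell}$ is a clique whenever $M_{k,\ell}=-1$. For condition $2$ I would analyse the adjacency between two distinct cells. If they lie in different rows and different columns, then one of the two coordinate orders is forced between every point of one cell and every point of the other, so the bipartite pattern is complete (type (c)) or empty (type (d)) depending only on the relative position of the cells. The genuinely interesting cases are the remaining two: for two cells in a common row the $x$-orders are forced but the $y$-orders are free, so adjacency across the cells is governed by the $y$-coordinates, while for two cells in a common column it is symmetrically governed by the $x$-coordinates.

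The crux is therefore to produce a \emph{single} linear order $L$ of $V(G)$ realising all of these patterns simultaneously as types (a)/(b). Ordering by $x$ alone or by $y$ alone cannot work, since a same-row pair wants $L$ to follow the $y$-order and a same-column pair wants it to follow the $x$-order; a pair inside a decreasing cell already shows these orders can disagree, and around a cycle of cells a naive choice can produce a directed cycle of constraints. To reconcile them I would first invoke the results of \cite{grid-classes} to reduce to the case where $M$ is a \emph{partial multiplication matrix}, i.e. there are column signs $c_k$ and row signs $r_\ell$ with $M_{k,\ell}=c_kr_\ell$ for every non-zero entry; this is legitimate because every geometric grid class coincides with that of such a matrix (obtained by refinement), and refinement only enlarges the number of cells to another constant. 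Using these signs I would orient each column $k$ by $c_k$ and each row $\ell$ by $r_\ell$, declare $L$ between two cells of a common row (resp.\ column) to agree with the correspondingly signed $y$- (resp.\ $x$-) order, and take $L$ to be a linear extension of the resulting relation.

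The main obstacle, and the heart of the argument, is to show that this relation is acyclic, so that the extension $L$ exists and is a genuine linear order. The key point is that the points are not placed arbitrarily: each point lies on a segment, so within a cell its $x$- and $y$-coordinates are tied to a single parameter $s\in(0,1)$, and it is precisely this rigidity, together with the product form $M_{k,\ell}=c_kr_\ell$, that prevents the alternating row/column constraints around a cycle of cells from closing up. I expect to verify this by tracing an arbitrary cycle of cells and deriving a contradiction among the inequalities that the chosen orientations impose on the parameters $s$; the $2\times2$ block, where a naive orientation yields a $4$-cycle but the signed choice does not, is the model case. Once $L$ is in hand, rereading the case analysis shows that each same-row pattern is $L$ or $L^{-1}$ according to $r_\ell$, each same-column pattern likewise according to $c_k$, and all other patterns are types (c)/(d); this verifies condition $2$ of Theorem~\ref{thm:k-letter} and completes the proof. (Equivalently, $L$ may be described as the reading order underlying the regular-language encoding of Theorem~\ref{thm:grid}, which amounts to the same construction.)
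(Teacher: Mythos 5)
Your strategy coincides with the paper's: reduce to a partial multiplication matrix, take its non-zero cells as the letter classes (increasing cells giving independent sets, decreasing cells cliques), note that cells sharing neither a row nor a column give complete or empty patterns, and use the column and row signs $c_k,r_\ell$ to orient the same-row and same-column patterns before appealing to Theorem~\ref{thm:k-letter}. The endgame you describe is also the paper's. But there is a genuine gap exactly where you yourself place ``the heart of the argument'': you never prove that the signed row- and column-constraints form an acyclic relation, i.e.\ that the linear order $L$ exists. You only write that you ``expect to verify'' this by tracing cycles of cells and deriving a contradiction among inequalities. Since every other step of the proposal is routine bookkeeping, the one step that is deferred is precisely the one carrying the content of the theorem, so as written the proof is incomplete.

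The gap is real but can be closed in one stroke, and this is in effect what the paper does: define $L$ explicitly rather than as a linear extension of local constraints. For a non-zero cell $C_{k,\ell}$ of a partial multiplication matrix, its \emph{base point} is the corner at which both signed directions start; since $M_{k,\ell}=c_kr_\ell$, this corner is an endpoint of the segment in that cell. For a chosen point $p$ in $C_{k,\ell}$ with base point $(x_0,y_0)$, let $d_p$ be its distance to the base point; then $x_p=x_0+c_kd_p/\sqrt{2}$ and $y_p=y_0+r_\ell d_p/\sqrt{2}$, i.e.\ $d_p=\sqrt{2}\,c_k(x_p-x_0)=\sqrt{2}\,r_\ell(y_p-y_0)$. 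Moreover, all base points of cells in a fixed row $\ell$ have the same $y$-coordinate ($\ell-1$ if $r_\ell=1$, and $\ell$ if $r_\ell=-1$), and all base points of cells in a fixed column the same $x$-coordinate. Consequently, for two points in distinct cells of a common row, $r_\ell y_p<r_\ell y_q$ if and only if $d_p<d_q$, and likewise for a common column with $c_kx$. Thus $d$ is a potential function: every one of your constraint edges strictly increases $d$, so the relation is acyclic --- and in fact, after perturbing so that all distances are distinct, the $d$-order itself is the desired $L$. This order is exactly the reading order $\phi(\pi)$ by which the paper encodes $\pi$ as a word over the cell alphabet (the order underlying the regular-language encoding of Theorem~\ref{thm:grid}), so your closing parenthetical remark is correct; the point is that in a self-contained proof this is the claim that must be established rather than asserted.
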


To prove Theorem~\ref{thm-8-3-8}, we first outline the correspondence (bijection)  between a geometrically griddable class of permutations and a regular language
established in Theorem~\ref{thm:grid}. To this end, we need the following definition from \cite{grid-classes}.

\begin{definition}
We say that a $0/\pm 1$ matrix $M$ of size $t\times u$ is a {\em partial multiplication matrix}
if there are column and row signs
$$
c_1, \ldots , c_t, r_1, \ldots , r_u \in \{1,-1\}
$$
such that every entry $M_{k,\ell}$ is equal to either $0$ or the product $c_kr_{\ell}$.
\end{definition}

\begin{example} The matrix\index{matrix} $\left(\begin{array}{rrr}  1 & 0 & -1 \\   -1& 1 & 0 \end{array} \right)$  
is a partial\index{partial multiplication matrix} multiplication matrix. This matrix\index{matrix} has column and row signs $c_2=c_3=r_1=1$ and $c_1=r_2=-1$.  
\end{example}

The importance of this notion for the study of geometric grid classes of permutations is due to the following 
proposition proved in~\cite{grid-classes}.

\begin{proposition}
Every geometric grid class is the geometric grid class of a partial multiplication matrix.
\end{proposition}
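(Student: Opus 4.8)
The plan is to prove the (formally stronger) statement that \emph{every} $0/\pm1$ matrix $M$ can be refined into a partial multiplication matrix defining the same geometric grid class, by subdividing each cell of $M$ into a $2\times 2$ block. Concretely, given $M$ of size $t\times u$, I would build a $0/\pm1$ matrix $M'$ of size $2t\times 2u$ as follows: a cell with $M_{k,\ell}=1$ is replaced by the block carrying increasing segments on its main diagonal, i.e.\ $M'_{2k-1,2\ell-1}=M'_{2k,2\ell}=1$ with the other two block entries $0$; a cell with $M_{k,\ell}=-1$ is replaced by the block carrying decreasing segments on its antidiagonal, i.e.\ $M'_{2k-1,2\ell}=M'_{2k,2\ell-1}=-1$ with the other two block entries $0$; and a zero cell becomes the $2\times 2$ zero block.

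First I would check that $\mathrm{Geom}(M')=\mathrm{Geom}(M)$. The point is that the two segments placed inside the block of an increasing cell share the corner $(2k-1,2\ell-1)$ and, both having slope $+1$, join into a single increasing segment from $(2(k-1),2(\ell-1))$ to $(2k,2\ell)$; similarly the two segments of a decreasing cell join into a single decreasing segment from $(2(k-1),2\ell)$ to $(2k,2(\ell-1))$. Hence the standard figure of $M'$ is exactly the standard figure of $M$ scaled by a factor of $2$ in each coordinate. Since the drawing procedure defining a geometric grid class depends only on the relative (left-to-right and bottom-to-top) positions of the chosen points, uniform scaling of the figure leaves the set of drawable permutations unchanged, so the two classes coincide.

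The key remaining step is to exhibit column and row signs witnessing that $M'$ is a partial multiplication matrix. I would take the alternating signs $c'_{2k-1}=+1,\ c'_{2k}=-1$ for $1\le k\le t$ and $r'_{2\ell-1}=+1,\ r'_{2\ell}=-1$ for $1\le \ell\le u$, so that the product $c'_a r'_b$ equals $+1$ exactly when $a$ and $b$ have the same parity and $-1$ when they have opposite parity. By construction every $+1$ entry of $M'$ sits at a position $(2k-1,2\ell-1)$ or $(2k,2\ell)$, whose two coordinates share parity, and every $-1$ entry sits at $(2k-1,2\ell)$ or $(2k,2\ell-1)$, whose coordinates have opposite parity; thus each nonzero entry equals the corresponding product, while the zero entries are permitted to differ. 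This verifies the defining condition and completes the argument.

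I expect the main obstacle to be careful bookkeeping rather than genuine difficulty: one must fix the indexing convention (column first, then row, with rows read bottom to top) and confirm that the collinear half-segments really reconstitute each original diagonal, so that no spurious crossings or gaps are introduced into the figure. Once the geometry is pinned down, the sign verification is immediate from the parity observation above. It is worth remarking that this doubling is precisely what resolves the sign inconsistencies---cycles of cells whose entries multiply to $-1$---that prevent a general $M$ from being a partial multiplication matrix on its own: the refinement forces all $+1$ and all $-1$ segments onto same-parity and opposite-parity positions respectively, which is automatically consistent with a product of alternating column and row signs.
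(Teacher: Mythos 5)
Your proposal is correct and is essentially the argument given in the cited source \cite{grid-classes} (which this paper invokes without reproducing): there the doubled refinement $M^{\times 2}$, with alternating column and row signs, is exactly your $M'$. The only point you gloss over is that the two open half-segments in each block share an endpoint that belongs to neither, so the figure of $M'$ is the scaled figure of $M$ minus finitely many lattice points; this is harmless, since any drawing of a permutation can be perturbed off those points without changing the relative order of its entries.
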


Let $M$ be a $t\times u$  partial multiplication matrix with column and row signs 
$$c_1, \ldots , c_t, r_1, \ldots , r_u \in \{1,-1\}$$
and let $\Phi_M$ be the standard gridded figure of $M$. We will interpret the signs of the columns and rows of $M$ as
the ``directions'' associated with the columns and rows of $\Phi_M$ with the following convention: $c_i=1$ corresponds to $\rightarrow$,
$c_i=-1$ corresponds to $\leftarrow$, $r_i=1$ corresponds to $\uparrow$, and $r_i=-1$ corresponds to $\downarrow$.  
The standard gridded figure of the matrix $M=\left(\begin{array}{rrr}  0 & 1 & 1 \\   1& -1 & -1 \end{array} \right)$ 
with row signs $r_1 = -1$ and $r_2 = 1$ and column signs $c_1 = -1$, $c_2 = c_3 = 1$ is represented in Figure~\ref{fig:partial}.

\begin{figure}[ht]
\begin{center}
\begin{picture}(120,100)

\put(50,50){\line(1,1){50}} 
\put(50,50){\line(1,-1){50}}
\put(100,50){\line(1,1){50}} 
\put(100,50){\line(1,-1){50}}
\put(50,50){\line(-1,-1){50}}

\put(-5,52){\vector(0,1){46}} 
\put(-5,48){\vector(0,-1){46}}
\put(48,-5){\vector(-1,0){46}} 
\put(52,-5){\vector(1,0){46}}
\put(102,-5){\vector(1,0){46}}

\put(107,43){\circle*{5}} \put(114,43){$p_1$}
\put(114,36){\circle*{5}} \put(110,24){$p_2$}
\put(71,71){\circle*{5}} \put(67,61){$p_3$}
\put(78,22){\circle*{5}} \put(76,28){$p_4$}
\put(15,15){\circle*{5}} \put(13,5){$p_5$}
\put(141,91){\circle*{5}} \put(137,82){$p_6$}
\put(96,96){\circle*{5}} \put(88,85){$p_7$}

\linethickness{0.5mm}
\put(0,0){\line(1,0){150}} 
\put(0,0){\line(0,1){100}}
\put(0,100){\line(1,0){150}} 



\put(0,50){\line(1,0){150}} 
\put(50,0){\line(0,1){100}}
\put(100,0){\line(0,1){100}} 
\put(150,0){\line(0,1){100}}

\end{picture}
\caption{A standard gridded figure of a partial multiplication matrix.}
\label{fig:partial}
\end{center}
\end{figure}
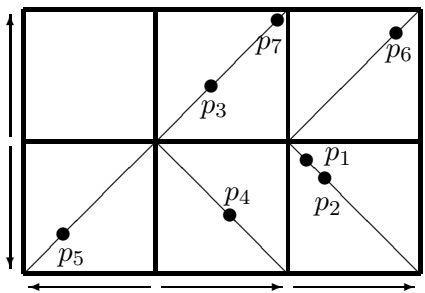

The {\it base point} of a cell $C_{k,\ell}$ of the figure $\Phi_M$ is one of the four corners of the cell, where both directions 
(associated with column $k$ and row $\ell)$ start. For instance, in Figure~\ref{fig:partial} the base point of the cell $C_{3,1}$
is the top-left corner.  

In order to establish a bijection between ${\rm Geom}(M)$ and a regular language, 
we first fix an alphabet  $\Sigma$ (known as the {\it cell alphabet} of $M$) as follows:
$$
\Sigma=\{a_{k\ell}\ :\ M_{k,\ell}\ne 0\}.
$$
Now, let $\pi$ be a permutation in ${\rm Geom}(M)$, i.e.\ a permutation represented by a set of $n$ points in the figure $\Phi_M$.
For each point $p_i$ of $\pi$, let $d_i$ be the distance from the base point of the cell containing $p_i$ to $p_i$.
Without loss of generality, we assume that these distances are pairwise different and the points are ordered so that 
$0<d_1<d_2<\cdots<d_n<1$. If $p_i$ belongs to the cell $C_{k,\ell}$ of  $\Phi_M$, we define $\phi(p_i)=a_{k\ell}$.
Then $\phi(\pi)=\phi(p_1)\phi(p_2)\cdots\phi(p_n)$ is a word in the alphabet $\Sigma$, i.e. $\phi$ defines a mapping from 
${\rm Geom}(M)$ to $\Sigma^*$. Figure~\ref{fig:partial} shows seven points defining the permutation $1527436$.
The mapping $\phi$ associates with this permutation a word in the alphabet $\Sigma$ as follows: $\phi(1527436)=a_{31}a_{31}a_{22}a_{21}a_{11}a_{32}a_{22}$.

Conversely, let $w=w_1\cdots w_n$ be a word in $\Sigma^*$ and let $0<d_1<\cdots<d_n<1$ be $n$ distances 
chosen arbitrarily. If $w_i=a_{k\ell}$, we let $p_i$ be the point on the line segment in cell $C_{k,\ell}$ at distance 
$d_i$ from the base point of $C_{k,\ell}$. The $n$ points of $\Phi_M$ constructed in this way define a permutation $\psi(w)$ 
in ${\rm Geom}(M)$. Therefore, $\psi$ is a mapping from  $\Sigma^*$ to ${\rm Geom}(M)$. 

This correspondence between $\Sigma^*$ and ${\rm Geom}(M)$ is not yet a bijection, as illustrated in Figure~\ref{fig:two},
because the order in which the points are consecutively inserted into {\it independent} cells (i.e.\ cells which share neither 
a column nor a row) is irrelevant. To turn this correspondence into a bijection, we say that two words $v,w\in \Sigma^*$ are 
{\it equivalent} if one can be obtained from the other by successively interchanging adjacent letters which represent independent cells.
The equivalence classes of this relation form a {\it trace monoid} and each element of this monoid is called a {\it trace}. 
It is known that in any trace monoid it is possible to choose a unique representative from each trace in such a way that 
the resulting set of representatives forms a regular language (see e.g. \cite{Diekert}, Corollary 1.2.3). 
This is the language which is in a bijection with ${\rm Geom}(M)$, as was shown in \cite{grid-classes}.

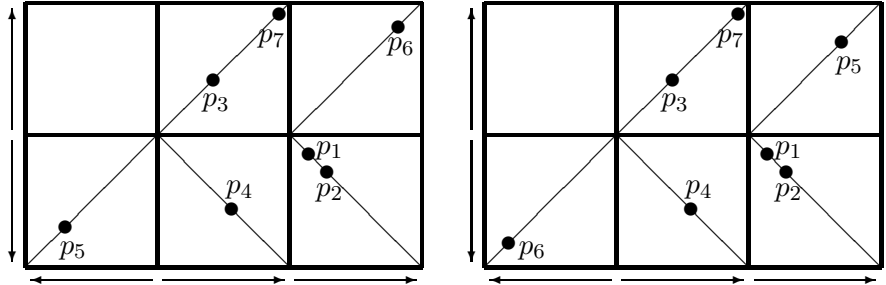
\begin{figure}[ht]
\begin{center}
\begin{picture}(170,100)

\put(50,50){\line(1,1){50}} 
\put(50,50){\line(1,-1){50}}
\put(100,50){\line(1,1){50}} 
\put(100,50){\line(1,-1){50}}
\put(50,50){\line(-1,-1){50}}

\put(-5,52){\vector(0,1){46}} 
\put(-5,48){\vector(0,-1){46}}
\put(48,-5){\vector(-1,0){46}} 
\put(52,-5){\vector(1,0){46}}
\put(102,-5){\vector(1,0){46}}

\put(107,43){\circle*{5}} \put(110,43){$p_1$}
\put(114,36){\circle*{5}} \put(110,27){$p_2$}
\put(71,71){\circle*{5}} \put(67,61){$p_3$}
\put(78,22){\circle*{5}} \put(76,28){$p_4$}
\put(15,15){\circle*{5}} \put(13,5){$p_5$}
\put(141,91){\circle*{5}} \put(137,82){$p_6$}
\put(96,96){\circle*{5}} \put(88,85){$p_7$}

\linethickness{0.5mm}
\put(0,0){\line(1,0){150}} 
\put(0,0){\line(0,1){100}}
\put(0,100){\line(1,0){150}} 



\put(0,50){\line(1,0){150}} 
\put(50,0){\line(0,1){100}}
\put(100,0){\line(0,1){100}} 
\put(150,0){\line(0,1){100}}

\end{picture}
\begin{picture}(150,100)

\put(50,50){\line(1,1){50}} 
\put(50,50){\line(1,-1){50}}
\put(100,50){\line(1,1){50}} 
\put(100,50){\line(1,-1){50}}
\put(50,50){\line(-1,-1){50}}

\put(-5,52){\vector(0,1){46}} 
\put(-5,48){\vector(0,-1){46}}
\put(48,-5){\vector(-1,0){46}} 
\put(52,-5){\vector(1,0){46}}
\put(102,-5){\vector(1,0){46}}

\put(107,43){\circle*{5}} \put(110,43){$p_1$}
\put(114,36){\circle*{5}} \put(110,27){$p_2$}
\put(71,71){\circle*{5}} \put(67,61){$p_3$}
\put(78,22){\circle*{5}} \put(76,28){$p_4$}
\put(9,9){\circle*{5}} \put(13,5){$p_6$}
\put(135,85){\circle*{5}} \put(133,75){$p_5$}
\put(96,96){\circle*{5}} \put(88,85){$p_7$}

\linethickness{0.5mm}
\put(0,0){\line(1,0){150}} 
\put(0,0){\line(0,1){100}}
\put(0,100){\line(1,0){150}} 



\put(0,50){\line(1,0){150}} 
\put(50,0){\line(0,1){100}}
\put(100,0){\line(0,1){100}} 
\put(150,0){\line(0,1){100}}

\end{picture}
\caption{Two drawings of the permutation $1527436$. The drawing on the left is encoded as $a_{31}a_{31}a_{22}a_{21}a_{11}a_{32}a_{22}$
and the drawing on the right is encoded as $a_{31}a_{31}a_{22}a_{21}a_{32}a_{11}a_{22}$.}
\label{fig:two}
\end{center}
\end{figure}

Next,  we will show that the permutation graph $G_{\pi}$ of $\pi\in {\rm Geom}(M)$ is a $k$-letter graph with $k=|\Sigma|$.
Indeed, the non-empty cells of the figure $\Phi_M$ defines a partition of the vertex set of $G_{\pi}$ into 
cliques and independent sets and the word $\phi(\pi)$ defines the order of the vertex set of $G_{\pi}$
satisfying conditions of Theorem~\ref{thm:k-letter}. More formally, let us show that the matrix $M$ uniquely defines 
a decoder ${\cal P}\subseteq \Sigma^2$ such that the letter 
graph $G({\cal P},w)$ of the word $w=\phi(\pi)$ coincides with $G_{\pi}$. In order to define the decoder $\cal P$, 
we observe that two points $p_i$ and $p_j$ of a permutation $\pi\in {\rm Geom}(M)$ corresponds to a pair of adjacent 
vertices in $G_{\pi}$ if and only if one of them lies to the left and above the second one in the figure $\Phi_M$.
Therefore, if 
\begin{itemize}
\item $M_{k,\ell}=1$, then the points lying in the cell $C_{k,\ell}$ form an independent set in the permutation graph 
of $\pi$. Therefore, we do not include the pair $(a_{k\ell},a_{k\ell})$ in $\cal P$. 
\item $M_{k,\ell}=-1$, then the points lying in the cell $C_{k,\ell}$ form a clique in the permutation graph 
of $\pi$. Therefore, we include the pair $(a_{k\ell},a_{k\ell})$ in $\cal P$.
\item two cells $C_{k,\ell}$ and $C_{s,t}$ are independent with $k<s$ and $\ell <t$,
then no point of $C_{k,\ell}$ is adjacent to any point of $C_{s,t}$ in the permutation graph 
of $\pi$. Therefore, we include neither $(a_{k\ell},a_{st})$ nor $(a_{st},a_{k\ell})$ in $\cal P$. 
\item two cells $C_{k,\ell}$ and $C_{s,t}$ are independent with $k<s$ and $\ell > t$,
then every point of $C_{k,\ell}$ is adjacent to every point of $C_{s,t}$ in the permutation graph 
of $\pi$. Therefore, we include both pairs $(a_{k\ell},a_{st})$ and $(a_{st},a_{k\ell})$ in $\cal P$. 
\item two cells $C_{k,\ell}$ and $C_{s,t}$ share a column, i.e. $k=s$, then we look at the sign (direction) associated 
with this column and the relative position of the two cells within the column. 
\begin{itemize}
\item If $c_k=1$ (i.e. the column is oriented from left to right) and $\ell>t$ (the first of the two cells is above the second one),
then only the pair $(a_{k\ell},a_{kt})$ is included in $\cal P$.
\item If $c_k=1$ and $\ell<t$, then only the pair $(a_{kt},a_{k\ell})$ is included in $\cal P$.
\item If $c_k=-1$ (i.e. the column is oriented from right to left) and $\ell>t$ (the first of the two cells is above the second one),
then only the pair $(a_{kt},a_{k\ell})$ is included in $\cal P$.
\item If $c_k=-1$ and $\ell<t$, then only the pair $(a_{k\ell},a_{kt})$ is included in $\cal P$.
\end{itemize}
\item two cells $C_{k,\ell}$ and $C_{s,t}$ share a row, i.e. $\ell=t$, then we look at the sign (direction) associated 
with this row and the relative position of the two cells within the row. 
\begin{itemize}
\item If $r_{\ell}=1$ (i.e. the row is oriented from bottom to top) and $k<s$ (the first of the two cells is to the left of the second one),
then only the pair $(a_{s\ell},a_{k\ell})$ is included in $\cal P$.
\item If $r_{\ell}=1$ and $k>s$, then only the pair $(a_{k\ell},a_{s\ell})$ is included in $\cal P$.
\item If $r_{\ell}=-1$ (i.e. the row is oriented from top to bottom) and $k<s$,
then only the pair $(a_{k\ell},a_{s\ell})$ is included in $\cal P$.
\item If $r_{\ell}=-1$ and $k>s$, then only the pair $(a_{s\ell},a_{k\ell})$ is included in $\cal P$.
\end{itemize}
\end{itemize}
It is now a routine task to verify that $G({\cal P},w)$ coincides with $G_{\pi}$.


\subsection{From $2$-letter graphs to geometrically griddable classes of permutations}
\label{sec:back}

In this section, we prove the ``if'' part of Conjecture~\ref{con:main} for $k=2$. In other words, we prove the following result. 

\begin{theorem}
Let $X$ be a class of permutations and ${\cal G}_X$ the corresponding class of permutation graphs.
If ${\cal G}_X$ is a class of $2$-letter graphs, then $X$ is geometrically griddable.
\end{theorem}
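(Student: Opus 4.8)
The plan is to show that the diagram of every $\pi\in X$ admits a gridding by a $0/\pm1$ matrix with at most two non-zero cells, and then to upgrade this gridding from monotone to geometric. Fix $\pi\in X$ and apply Theorem~\ref{thm:k-letter} to $G_{\pi}\in{\cal G}_X$: there is a partition $V(G_{\pi})=V_1\cup V_2$ into at most two parts, each a clique or an independent set, together with a linear order $L$ making the bipartite graph between $V_1$ and $V_2$ one of the four admissible types. Reading this partition back onto the points of the diagram of $\pi$, I would invoke the elementary dictionary of permutation graphs: a clique is a set of pairwise crossing segments, hence a decreasing subsequence of $\pi$, while an independent set is an increasing subsequence. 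Thus $\pi$ is a merge of at most two monotone subsequences $A$ and $B$. Moreover each of the four admissible between-types — the two orders $L\cap(V_1\times V_2)$ and $L^{-1}\cap(V_1\times V_2)$, the complete type, and the empty type — induces a $2K_2$-free, i.e.\ chain, bipartite graph between $A$ and $B$. This last constraint is crucial: it is exactly what rules out configurations such as a perfect matching $nK_2$ between the two parts, the kind of structure a general merge of two monotone sequences (for instance a long direct sum $\bigoplus 21$) would produce.

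The technical heart is then the claim that a permutation which is a merge of two monotone subsequences with a $2K_2$-free between-graph is griddable by a $0/\pm1$ matrix with at most two non-zero cells. I would argue this by cases on the between-type. If the between-graph is empty, no crossing occurs between $A$ and $B$, so one part lies entirely to one side of the other and a single cell, or two cells on a diagonal, suffice. If it is complete, every between-pair crosses, forcing $A$ and $B$ onto an anti-diagonal pair of cells. The chain case is the interesting one: a diagonal placement would produce an empty or complete between-graph, so the two cells must share a row or a column (the configuration realised by the universal chain graph $Z_n$), and the nested-neighbourhood order of the chain graph is used to exhibit an actual separating grid line in the diagram of $\pi$. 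The sign of each cell is $+1$ or $-1$ according to whether the corresponding part is increasing or decreasing, and in every case at most two cells are used.

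To conclude, observe that a $0/\pm1$ matrix with at most two non-zero cells has an acyclic cell graph (at most two vertices and one edge). By the result of \cite{grid-classes} that ${\rm Geom}(M)={\rm Grid}(M)$ whenever the cell graph of $M$ is a forest, the gridding produced above is automatically geometric, so $\pi\in{\rm Geom}(M)$; in the small cases at hand one may equally well place the points on the standard figure directly. Finally, up to the removal of empty rows and columns and the obvious symmetries there are only finitely many $0/\pm1$ matrices with at most two non-zero cells, so every $\pi\in X$ lies in one member of a fixed finite family of geometric grid classes. Hence $X$ is contained in their union and is geometrically griddable.

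The main obstacle I anticipate is the chain case of the structural claim: one must convert the abstract order $L$ supplied by Theorem~\ref{thm:k-letter} into a genuine horizontal or vertical separating line for the two monotone subsequences, and this line need not respect the partition $V_1,V_2$ itself — already for the pattern $1324$ the natural chain partition is value-interleaved while a different value threshold grids it. A secondary point requiring care is verifying that the four between-types are exhaustive for lettericity two, equivalently that the matching $nK_2$ and its complement $\overline{nK_2}$ have lettericity exceeding two for large $n$; this is precisely what excludes the non-griddable obstructions $\bigoplus 21$ and $\ominus 12$, and it could alternatively be packaged through the standard criterion that monotone griddability fails exactly in the presence of arbitrarily long such sums, the geometric upgrade then still being supplied by the forest argument above.
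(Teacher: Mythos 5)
Your central structural claim --- that every permutation whose graph is a $2$-letter graph can be gridded by a $0/\pm 1$ matrix with at most two non-zero cells --- is false, and it fails exactly in the case you flagged as the ``main obstacle''. Take the decoder $\{(a,b),(a,a)\}$, so that $G_\pi$ is a threshold graph; then $\pi$ is a merge of a decreasing subsequence (the clique) and an increasing one (the independent set), and the between-graph is $2K_2$-free, so your hypotheses hold and you are in your ``chain case''. Now consider $\pi = 254361$. Its graph is threshold (the points with values $5,4,3,1$ form a clique, the point of value $1$ is adjacent to everything, and the points of values $2$ and $6$ are pendant to it), and the word $abbaaa$ with decoder $\{(a,b),(a,a)\}$ represents it, so $G_\pi$ is a $2$-letter graph. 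But $\pi$ has no gridding with at most two non-zero cells: the only monotone prefixes of $254361$ are $2$ and $25$ and the only monotone suffixes are $1$ and $61$, so no vertical line splits it into two monotone parts; checking the value thresholds $t=1,\dots,5$ shows no horizontal line does either; and a placement into two independent (diagonal) cells would in particular require such a vertical line. So there is no ``separating grid line'' for \emph{any} choice of monotone $2$-partition --- the difficulty you anticipated (converting the abstract order $L$ into a line, possibly after repartitioning) is not a technical one, because the statement you are trying to prove is not true. Larger examples of the same kind ($18365472$, obtained by placing points on all four arms of the $\times$-figure with interleaved distances) show the phenomenon is robust. As a secondary point, even your ``empty between-graph'' case is wrong as stated: for $\pi = 12\oplus 321\oplus 12 = 1254367$, whose graph $K_3+4K_1$ is a $2$-letter graph over $\{(a,a)\}$, the increasing part straddles the clique, so three diagonal cells are needed rather than two (this one, at least, is harmlessly fixable since a diagonal $3$-cell matrix still has a forest cell graph).

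This is precisely where the paper's proof does something essentially different. For the threshold decoder it grids $\pi$ not in two cells but on the $2\times 2$ matrix with \emph{all four} entries non-zero, $\left(\begin{smallmatrix} -1 & 1 \\ 1 & -1 \end{smallmatrix}\right)$, and it invokes Waton's Proposition 5.6.1: a permutation can be drawn on this $\times$-figure if and only if it avoids $2143, 3412, 2413, 3142$, which are exactly the permutations whose graphs contain no $2K_2$, $C_4$ or $P_4$, i.e.\ the threshold permutations. Note that your concluding ``forest argument'' (that ${\rm Grid}(M)={\rm Geom}(M)$ when the cell graph of $M$ is a forest) cannot be used to recover this: the cell graph of the $\times$-matrix is a $4$-cycle, so the geometric statement genuinely has to be proved (or quoted, as the paper does) rather than obtained as a free upgrade of a monotone gridding. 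For the same reason your proposed alternative packaging via the monotone-griddability criterion also stalls at this case: threshold permutations are indeed monotone griddable, but only by matrices with cyclic cell graphs, so the forest upgrade never applies. The remaining cases of your analysis (complete or empty between-graph, and the genuine chain-graph decoder $\{(a,b)\}$, which the paper handles via Atkinson's decomposition of ${\rm Av}(321,2143)$ into two classes griddable in $(1\ 1)$ and $\left(\begin{smallmatrix}1\\1\end{smallmatrix}\right)$) are broadly in line with the paper, but without a correct treatment of the threshold case the proposal does not prove the theorem.
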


\begin{proof}
Let $\Sigma = \{a, b\}$, and fix a decoder $\mathcal{P}$. Consider a graph $G_{\pi}\in {\cal G}_X$ and represent it by a word over $\Sigma$ with the decoder $\mathcal{P}$. 

\medskip
Assume first that $\mathcal{P}$ contains either both of $(a, b)$ and $(b, a)$, or none of them.
Then we have either all possible edges between the set of vertices of $G_{\pi}$ labelled by $a$ and the set of vertices of $G_{\pi}$  labelled by $b$ or none of them. 
In the first case,  $X$ is contained in the geometric grid class of the matrix on the left, and in the second case,  
$X$ is contained in the geometric grid class of the matrix on the right: 
$$\left(
\begin{array}{cc}
  m_a & 0 \\
  0 & m_b 
\end{array}
\right)
\mbox{ and } 
\left(
\begin{array}{cc}
   0 & m_a \\
   m_b & 0 
 \end{array}\right),
$$
where $m_a$ (resp. $m_b$) denotes either $1$ if $(a,a) \notin \mathcal{P}$ (resp. $(b,b) \notin \mathcal{P}$) or $-1$ if $(a,a) \in \mathcal{P}$ (resp. $(b,b) \in \mathcal{P}$).

\medskip
Now suppose only one of $(a, b)$ and $(b, a)$ is in $\mathcal{P}$. 
Without loss of generality assume it is $(a, b)$, since the other case is similar.

If only one of $(a, a)$ and $(b, b)$ is in $\mathcal{P}$, then $G_\pi$ is a threshold graph. In this case, $\pi$ can be placed in the figure of
$$
\left(
\begin{array}{rr}
   -1 & 1 \\
   1 & -1 
 \end{array}\right)
$$
known as $\times$-figure (see the right-hand side of  figure Figure~\ref{fig:grids}). 
Indeed, according to Proposition 5.6.1 in \cite{Waton},  a permutation can be placed in the $\times$-figure if and only if it avoids 2143, 3412, 2413 and 3142.
The first two of these permutations correspond to $2K_2$ and $C_4$, while the last two both correspond to $P_4$. 
Since a graph is threshold if and only if it is $(P_4,C_4,2K_2)$-free, we conclude that $\pi$ can be placed in the $\times$-figure, since $G_{\pi}$ is threshold.

The cases when either both or none of $(a, a)$ and $(b, b)$ belong to $\mathcal{P}$ are complement to each other. 
Therefore, we may assume without loss of generality that none of them belongs to $\mathcal{P}$. 
Then $G_\pi$ is a chain graph, and hence it is $K_3$ and $2K_2$-free. Hence $\pi$ avoids 321 and 2143. 
It is known \cite{Atkinson} that the class of permutations avoiding 321 and 2143 is the union of two classes: 
the class $A_1$ avoiding 321, 2143 and 3142, and the class $A_2$ avoiding 321, 2143 and 2413. 

A short case analysis shows that any permutation in $A_1$ can have at most one drop 
(i.e. two consecutive elements such that the first one is larger than the second one), hence it can be placed in the figure of 
$\begin{pmatrix}
1 & 1 \\
\end{pmatrix}$.
Similarly, any permutation in $A_2$ consists of two increasing subsequences such that all the elements of one of them 
are greater than every element of the other, hence it can be placed in the figure of
$\begin{pmatrix}
1 \\
1 \\
\end{pmatrix}$. 

The difference between the two classes can be illustrated as follows.   
For the class $A_1$, the word representing $G_{\pi}$ as a $2$-letter graph can be 
read at the top of the diagram representing $\pi$ (see the left diagram in Figure~\ref{fig:pp}),
while for the class $A_2$, this word can be read at the bottom of the diagram (see the right diagram in Figure~\ref{fig:pp}).
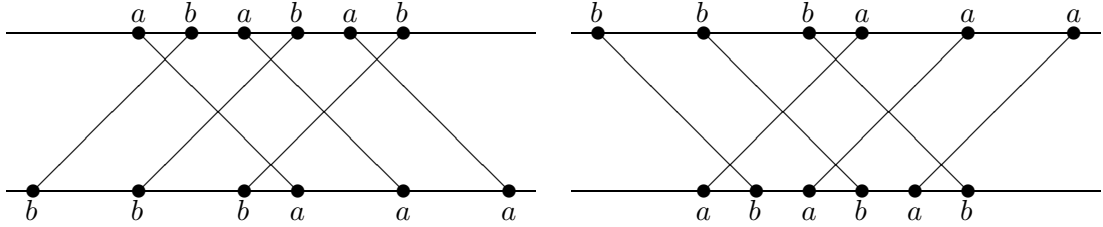
\begin{figure}[ht]
\begin{center}
\begin{picture}(210,90)
\put(40,0){\circle*{5}} 
\put(180,0){\circle*{5}}
\put(80,0){\circle*{5}} 
\put(100,0){\circle*{5}}
\put(0,0){\circle*{5}} 
\put(140,0){\circle*{5}}

\put(120,60){\circle*{5}} 
\put(40,60){\circle*{5}} 
\put(80,60){\circle*{5}} 
\put(100,60){\circle*{5}}
\put(140,60){\circle*{5}}
\put(60,60){\circle*{5}}

\put(-10,0){\line(1,0){200}} 
\put(-10,60){\line(1,0){200}} 
\put(40,0){\line(1,1){60}} 
\put(180,0){\line(-1,1){60}}
\put(80,0){\line(1,1){60}} 
\put(100,0){\line(-1,1){60}}
\put(0,0){\line(1,1){60}} 
\put(140,0){\line(-1,1){60}}
\put(57,64){$b$}
\put(97,-11){$a$} 
\put(-3,-11){$b$}
\put(97,64){$b$}
\put(137,-11){$a$} 
\put(37,-11){$b$}
\put(137,64){$b$}
\put(37,64){$a$}

\put(177,-11){$a$} 
\put(77,-11){$b$}
\put(77,64){$a$}
\put(117,64){$a$}
\end{picture}
\begin{picture}(210,90)
\put(40,0){\circle*{5}} 
\put(60,0){\circle*{5}}
\put(80,0){\circle*{5}} 
\put(100,0){\circle*{5}}
\put(120,0){\circle*{5}} 
\put(140,0){\circle*{5}}

\put(0,60){\circle*{5}} 
\put(40,60){\circle*{5}} 
\put(80,60){\circle*{5}} 
\put(100,60){\circle*{5}}
\put(140,60){\circle*{5}}
\put(180,60){\circle*{5}}

\put(-10,0){\line(1,0){200}} 
\put(-10,60){\line(1,0){200}} 
\put(40,0){\line(1,1){60}} 
\put(60,0){\line(-1,1){60}}
\put(80,0){\line(1,1){60}} 
\put(100,0){\line(-1,1){60}}
\put(120,0){\line(1,1){60}} 
\put(140,0){\line(-1,1){60}}
\put(-3,64){$b$}
\put(37,-11){$a$} 
\put(57,-11){$b$}
\put(37,64){$b$}
\put(77,-11){$a$} 
\put(97,-11){$b$}
\put(77,64){$b$}
\put(97,64){$a$}

\put(117,-11){$a$} 
\put(137,-11){$b$}
\put(137,64){$a$}
\put(177,64){$a$}
\end{picture}

\end{center}
\caption{The diagrams of two permutations $\pi$ such that $G_{\pi}$ is the graph of the word $ababab$ with ${\cal P}=\{(a,b)\}$.} 
\label{fig:pp}
\end{figure}
\end{proof}




\section{Characterization and recognition of $3$-letter graphs}
\label{sec:3letter}

To develop an efficient constructive algorithm for the recognition of $3$-letter graphs we focus on 
graphs representable over a specific decoder. For this purpose, we choose the decoder $\{(a,b),(b,c),(c,a)\}$,
because it has a nice cyclic structure simplifying some of the proofs. For other decoders, 
the recognition algorithms are similar (though  not identical), so we omit them.  

We start by presenting a decomposition theorem for 3-letter graphs with the decoder $\{(a,b),(b,c),(c,a)\}$ in Section~\ref{sec:decoder}.
This result can be viewed as a specialization of Theorem~\ref{thm:k-letter} to our particular case. It immediately leads to 
a simple polynomial-time algorithm to recognize graphs in our class, which is described  in Section~\ref{sec:nice}.
As a byproduct, the decomposition theorem of Section~\ref{sec:decoder}  also leads to the induced subgraph characterization of  
3-letter graphs with the decoder $\{(a,b),(b,c),(c,a)\}$. This result is of independent interest and is presented in Section~\ref{sec:induced}.


\subsection{Characterization of 3-letter graphs with the decoder $\{(a,b),(b,c),(c,a)\}$}
\label{sec:decoder}


To characterize $3$-letter graphs, we need a few observations about $2$-letter graphs.
Let $G=(V,E)$ be a graph and $A$ an independent set in $G$. We will say that 
a linear order $(a_1,a_2,\ldots,a_k)$ of the vertices of $A$ is
\begin{itemize}
\item[-] {\it increasing} if $i<j$ implies $N(a_i)\subseteq N(a_j)$,
\item[-] {\it decreasing} if $i<j$ implies $N(a_i)\supseteq N(a_j)$,
\item[-] {\it monotone} if it is either increasing or decreasing.
\end{itemize}

By definition, each part of a chain graph (i.e. a $2K_2$-free bipartite graph) admits a monotone ordering.
Let $G=(A\cup B,E)$ be a chain graph given together with a bipartition $V(G)=A\cup B$ of its vertices into two independent sets.
We fix an order of the parts ($A$ is first and $B$ is second),
a decreasing order for $A$, an increasing order for $B$, and call $G$ a {\it properly ordered graph}.
This notion suggests an easy way of representing a $2K_2$-free bipartite graph as a $2$-letter graph. 

Let $G=(A\cup B,E)$ be a properly ordered $2K_2$-free bipartite graph. To represent $G$ as a 2-letter graph, we fix the alphabet $\Sigma=\{a,b\}$ and decoder ${\cal P}=\{(a,b)\}$.
The word $\omega$ representing $G$ can be constructed as follows. 
To each vertex of $A$ we assign letter $a$ and to each vertex of $B$ we assign letter $b$. The $a$ letters will appear in $\omega$
in the oder in which the corresponding vertices appear in $A$ and the $b$ letters will appear in $\omega$ in the oder in which the corresponding vertices appear in $B$.
The rule defining the relative positions of $a$ vertices with respect to $b$ vertices can be described in two different ways as follows:
\begin{itemize}
\item[$R_1$] an $a$ vertex is located between the last $b$ non-neighbour (if any) and the first $b$ neighbour (if any), 
\item[$R_2$] a $b$ vertex is located between the last $a$ neighbour (if any) and the first $a$ non-neighbour (if any).  
\end{itemize}  
It is not difficult to see that both rules $R_1$ and $R_2$ define the same word and this word represents $G$.

\medskip
Now we turn to $3$-letter graphs.
Let $G=(A\cup B\cup C,E)$ be a graph whose vertex set is partitioned into three independent sets $A$, $B$, $C$
such that 
\begin{itemize}
\item[(a)] $G[A\cup B]$, $G[B\cup C]$  and $G[C\cup A]$ are $2K_2$-free bipartite graphs,
\item[(b)] there are no three vertices $a\in A$, $b\in B$, $c\in C$ inducing either a triangle $K_3$ or an anti-triangle $\overline{K}_3$.
\end{itemize}
We call any graph satisfying (a) and (b) {\it nice}. Our goal is to show that a graph $G$ is a 3-letter graph with the decoder $\{(a,b),(b,c),(c,a)\}$ if and only if it is nice. First, we prove the following lemma. 

\begin{lemma}\label{lem:nice}
Let $G=(A\cup B\cup C,E)$ be a nice graph. Then each of the independent sets $A$, $B$ and $C$ admits a linear ordering such that all three bipartite graphs
$G[A\cup B]$, $G[B\cup C]$  and $G[C\cup A]$ are properly ordered. 
\end{lemma} 

\begin{proof}
We start with a proper order of $G[A\cup B]$, in which case the order of $B$ is increasing with respect to $A$. 
Let us show that the same order of $B$ is decreasing with respect to $C$. 

Consider two vertices $b_i$ and $b_j$ of $B$ with $i<j$, i.e. $b_i$ precedes $b_j$ in the linear order of $B$ and hence $N(b_i)\cap A\subseteq N(b_j)\cap A$.
To show that the linear order of $B$ is decreasing with respect to $C$, assume the contrary: $b_j$ has a neighbour $c\in C$ non-adjacent to $b_i$. 
Without loss of generality, we may suppose that the inclusion $N(b_i)\cap A\subseteq N(b_j)\cap A$ is proper, 
since we can, if necessary, reorder all vertices with equal neighbourhoods in $A$ decreasingly with respect to their neighbourhoods in $C$, which keeps the graph $G[A\cup B]$ properly ordered.
According to this assumption, $b_j$ must have a neighbour $a\in A$ non-adjacent to $b_i$. But then either $a,b_j,c$ induce a triangle $K_3$ (if $a$ is adjacent to $c$) 
or  $a,b_i,c$ induce an anti-triangle $\overline{K}_3$ (if $a$ is not adjacent to $c$).
A contradiction in both cases shows that  the linear order of $B$ is decreasing with respect to $C$. 

Similar arguments show that the order of $A$ which is decreasing with respect to $B$ is increasing with respect to $C$. 
Now we fix a linear order of $C$ which is increasing  with respect to $B$ and conclude, as before, that it is decreasing with respect to $A$. In this way, 
we obtain a proper order for all three graphs $G[A\cup B]$, $G[B\cup C]$  and $G[C\cup A]$ (notice, in the last graph $C$ is the first part and $A$ is the second).
\end{proof}

\begin{theorem}\label{thm:nice}
A graph $G$ is a 3-letter graph with the decoder $\{(a,b),(b,c),(c,a)\}$ if and only if it is nice.
\end{theorem}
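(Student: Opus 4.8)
The plan is to prove both implications separately, since they have very different characters. The forward implication (a $3$-letter graph over $\{(a,b),(b,c),(c,a)\}$ is nice) is the routine one, which I would dispatch by restriction and a finite check; the reverse implication (every nice graph is realizable over this decoder) is the substantive one, which I would reduce to the existence of a single consistent linear order of $V(G)$.

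For the forward direction, suppose $G=G(\mathcal{P},w)$ with $\mathcal{P}=\{(a,b),(b,c),(c,a)\}$, and let $A,B,C$ be the vertices carrying the letters $a,b,c$; each is independent because none of $(a,a),(b,b),(c,c)$ lies in $\mathcal{P}$. To obtain condition (a), I would restrict $w$ to the letters $\{a,b\}$: the induced decoder is $\mathcal{P}\cap\{a,b\}^2=\{(a,b)\}$, so by the Example of Section~\ref{sec:letter-graphs} the graph $G[A\cup B]$ is a chain graph, and the cyclic symmetry of $\mathcal{P}$ handles $G[B\cup C]$ and $G[C\cup A]$ identically. For condition (b), I would enumerate the six possible orders in which a transversal triple $x\in A,\ y\in B,\ z\in C$ can occur in $w$; reading the adjacencies off $\mathcal{P}$, every order produces exactly one or two edges (a $P_3$, or a single edge plus an isolated vertex), so a transversal triple can never induce $K_3$ or $\overline{K}_3$.

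For the reverse direction I would start from a nice graph $G$, fix monotone orders on $A,B,C$ using Lemma~\ref{lem:nice}, and aim to build a word $w$ with $G(\mathcal{P},w)=G$. The key observation is that $\mathcal{P}$ forces a unique relative position for every cross-part pair: writing $u\prec v$ when $u$ must precede $v$, we need $x\prec y$ iff $xy\in E$ for $x\in A,\ y\in B$; $y\prec z$ iff $yz\in E$ for $y\in B,\ z\in C$; and $z\prec x$ iff $zx\in E$ for $z\in C,\ x\in A$. Pairs inside a single part impose no constraint and contribute no edges. Hence constructing $w$ reduces to showing that this forced partial relation $\prec$ extends to a linear order, i.e.\ that $\prec$ is acyclic; any linear extension then yields a word whose letter graph is exactly $G$, and the only verification left is that same-part pairs remain non-adjacent, which is immediate since $(a,a),(b,b),(c,c)\notin\mathcal{P}$.

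The acyclicity of $\prec$ is the heart of the matter and the step I expect to be the main obstacle. I would argue by taking a shortest directed cycle $v_1\prec v_2\prec\cdots\prec v_k\prec v_1$. First, no directed $3$-cycle exists: its three edges are all cross-part, so its vertices form a transversal triple, and cyclic precedence on such a triple is realized precisely by $K_3$ or $\overline{K}_3$, both excluded by (b). Next, for $k\ge 4$ I would show that $v_i$ and $v_{i+2}$ always lie in the same part: otherwise the forced edge between them either closes a directed $3$-cycle $v_i\prec v_{i+1}\prec v_{i+2}\prec v_i$ (impossible by the previous step) or short-circuits the path $v_i\prec v_{i+1}\prec v_{i+2}$ into a shorter cycle (contradicting minimality). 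But then the cycle alternates between only two parts, so it is a directed cycle inside one of the three bipartite graphs $G[A\cup B]$, $G[B\cup C]$, $G[C\cup A]$; since each is a chain graph by (a), its nested-neighbourhood (threshold) structure makes this two-part precedence acyclic, a contradiction. This establishes acyclicity and hence the desired word. As an aside, Lemma~\ref{lem:nice} also permits an explicit construction of $w$ by merging the three properly ordered chain graphs along their shared parts via the rules $R_1$/$R_2$, the consistency of that merge being exactly the acyclicity just proved, so I would present whichever version reads more cleanly.
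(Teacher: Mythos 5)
Your proposal is correct, but the reverse direction takes a genuinely different route from the paper. The paper first proves Lemma~\ref{lem:nice} to obtain mutually consistent proper orderings of $A$, $B$, $C$, then writes the word for $G[A\cup B]$ via rules $R_1$/$R_2$ and inserts each $c$-vertex into the interval these rules dictate; the two ways the insertion could fail are shown to produce a $K_3$ or a $\overline{K}_3$, contradicting niceness. You instead observe that the decoder forces a tournament-like precedence on all cross-part pairs (edge iff the decoder-compatible order) and reduce everything to acyclicity of this digraph, proved by a shortest-cycle argument: condition (b) kills transversal $3$-cycles; the chord/shortcut argument on a shortest cycle of length at least $4$ forces $v_i$ and $v_{i+2}$ into the same part, so the cycle alternates between two parts; and the chain property (a) kills such alternating cycles. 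Notably, your argument does not actually use Lemma~\ref{lem:nice} at all (your citation of it is superfluous -- the monotone orders play no role, since the linear extension produces them), so you get a self-contained proof that cleanly isolates what each of conditions (a) and (b) is for; the paper's construction, by contrast, is more directly algorithmic and its proper orderings feed into the structural picture reused in the recognition section. One step you should spell out is the final two-part acyclicity claim: an alternating directed cycle $x_1\prec y_1\prec x_2\prec\cdots\prec x_m\prec y_m\prec x_1$ in, say, $G[A\cup B]$ gives $y_i\in N(x_i)\setminus N(x_{i+1})$, hence by the chain property $N(x_{i+1})\subsetneq N(x_i)$ for every $i$ cyclically, a contradiction (for $m=2$ this is just a $2K_2$); also "threshold" is a misnomer there, as the relevant structure is that of chain graphs.
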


\begin{proof}
If $G$ is a 3-letter graph with the decoder $\{(a,b),(b,c),(c,a)\}$, then obviously $V_a$ (the set of vertices labelled by $a$), $V_b$ and $V_c$ 
are independent sets and condition (a) of the definition of nice graphs  is valid for $G$. To show that (b) is valid, assume $G$ contains a triangle induced by letters $a,b,c$. Then $b$ must appear
after $a$ in the word representing $G$, and $c$ must appear after $b$. But then $c$ appears after $a$, in which case $a$ is not adjacent to $c$, a contradiction. 
Similarly, an anti-triangle $a,b,c$ is not possible and hence $G$ is nice. 

Suppose now that $G=(A\cup B\cup C,E)$ is nice. According to Lemma~\ref{lem:nice}, we may assume that $A$, $B$ and $C$ are ordered in such a way that each of the three bipartite graphs
$G[A\cup B]$, $G[B\cup C]$  and $G[C\cup A]$ is properly ordered. 

We start by representing the graph $G[A\cup B]$ by a word $\omega$ with two letters $a,b$ according to rules $R_1$ or $R_2$. To complete the construction, we need to place the $c$ vertices 
\begin{itemize}
\item[-] among the $a$ vertices according to rule $R_1$, i.e. every $c$ vertex must be located between  the last $a$ non-neighbour $a_{lnn}$ (if any) and the first $a$ neighbour $a_{fn}$ (if any),
\item[-] among the $b$ vertices according to rule $R_2$, i.e. every $c$ vertex must be located between  the last $b$ neighbour $b_{ln}$ (if any) and the first $b$ non-neighbour $b_{fnn}$ (if any).
\end{itemize}
This is always possible, unless
\begin{itemize}
\item[-] either $a_{fn}$ precedes $b_{ln}$ in $\omega$, in which case $a_{fn}$ is adjacent to $b_{ln}$ and hence $a_{fn},b_{ln},c$ induce a triangle $K_3$,
\item[-] or $b_{fnn}$  precedes $a_{lnn}$ in $\omega$,  in which case $b_{fnn}$ is not adjacent to $a_{lnn}$  and hence $a_{lnn},b_{fnn},c$ induce an anti-triangle $\overline{K}_3$.
\end{itemize}
A contradiction in both case shows that $\omega$ can be extended to a word representing $G$. 
\end{proof}


\subsection{Recognition of 3-letter graphs with the decoder $\{(a,b),(b,c),(c,a)\}$}
\label{sec:nice}


In this section, we show how we can determine whether a graph $G$ can be represented as a 3-letter graph with the cyclic decoder $\{(a,b),(b,c),(c,a)\}$. In order to do that, we will assume that $G$ did indeed have such a representation $\omega$, and derive various properties of $\omega$.

If $G$ has a twin $v$ for a vertex $u$ (i.e. $N(v)=N(u)$), then any word representing $G-v$ can be extended to a word representing $G$ by 
assigning to $v$ the same letter as to $u$ and placing $v$ next to $u$.  This observation shows that we may assume without loss of generality that 
\begin{itemize}
\item $G$ is twin-free.
\end{itemize}
Due to the cyclic symmetry of the decoder, we may also assume without loss of generality that 
\begin{itemize}
\item the last letter of $\omega$ is $c$. 
\end{itemize}
Then 
\begin{itemize}
\item the first letter is not $a$, since otherwise the first and the last vertices are twins.
\end{itemize}

Assume that the first letter of $\omega$ is $b$. Then according to the decoder
\begin{itemize}
\item[(b1)] no vertex  between the first $b$ and the last $c$ is adjacent to both of them,  
\item[(b2)]  every vertex non-adjacent to the first $b$ and non-adjacent to the last $c$ must be labelled by $a$,
\item[(b3)]  every vertex non-adjacent to the first $b$ and adjacent to the last $c$ must be labelled by $b$,
\item[(b4)] every vertex adjacent to the first $b$ and non-adjacent to the last $c$ must be labelled by $c$. 
\end{itemize}
Therefore, in order to determine whether $G$ can be represented by a word starting with $b$ and ending with $c$, we 
\begin{itemize}
\item[-] inspect every pair of adjacent vertices and assign letter $b$ to one of them and letter $c$ to the other, 
\item[-] check whether the set of vertices adjacent to both of them is empty and split the remaining vertices of the graph into three subsets $A, B,C$ according to (b2), (b3) and (b4), respectively, 
\item[-] verify whether the partition obtained in this way satisfies the definition of nice graphs (conditions (a) and (b)).
\end{itemize}
\medskip
Finally, we determine whether $G$ can be represented by a word starting with $c$. Then according to the decoder
\begin{itemize}
\item[(c1)] no vertex between the first $c$ and the last $c$ is adjacent to both of them,  
\item[(c2)] every vertex adjacent to the first $c$ and non-adjacent to the last $c$ must be labelled by $a$, 
\item[(c3)]  every vertex non-adjacent to the first $c$ and adjacent to the last $c$ must be labelled by $b$,
\item[(c4)]  every vertex non-adjacent to the first $c$ and non-adjacent to the last $c$ must be labelled by $c$.
\end{itemize}
Therefore, in order to determine whether $G$ can be represented by a word starting with $c$ and ending with $c$, we 
\begin{itemize}
\item[-] inspect every pair of non-adjacent vertices and assign letter $c$ to both of them, 
\item[-] check whether the set of vertices adjacent to both of them is empty and split the remaining vertices of the graph into three subsets $A, B,C$ according to (c2), (c3) and (c4), respectively,
\item[-] verify whether the partition obtained in this way satisfies the definition of nice graphs (conditions (a) and (b)).
\end{itemize}
\medskip
From the above discussion we derive the following conclusion.

\begin{theorem}
The 3-letter graphs with the decoder $\{(a,b),(b,c),(c,a)\}$ can be recognized in polynomial time. 
\end{theorem}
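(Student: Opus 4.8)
The plan is to convert the structural characterization of Theorem~\ref{thm:nice} into an algorithm: since being a 3-letter graph with the decoder $\{(a,b),(b,c),(c,a)\}$ is equivalent to being \emph{nice}, it suffices to decide in polynomial time whether $V(G)$ admits a partition into three independent sets $A,B,C$ satisfying conditions (a) and (b). The crucial observation is that such a partition, if it exists, is almost entirely forced once we fix the first and last letters of a representing word $\omega$ together with the vertices realising them.

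First I would preprocess $G$ to be twin-free, replacing each class of twins by a single representative; this is a polynomial-time reduction that preserves membership in the class, since a word for $G-v$ extends to a word for $G$ by copying the letter of the twin of $v$ and placing the new vertex beside it. Using the cyclic symmetry $a\mapsto b\mapsto c\mapsto a$ of the decoder, I would assume the last letter of $\omega$ is $c$; twin-freeness then rules out $a$ as the first letter (the first $a$ and the last $c$ would both be adjacent to exactly the $b$-vertices, hence twins), leaving only the cases first letter $b$ and first letter $c$.

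In each case the algorithm guesses the two extreme vertices --- an ordered adjacent pair (first $b$, last $c$) in the first case, an ordered non-adjacent pair (first $c$, last $c$) in the second --- giving only $O(n^2)$ guesses. For a fixed guess the label of every other vertex is determined purely by its adjacency to these two vertices: this is the content of the implications (b2)--(b4) and (c2)--(c4), while (b1) and (c1) supply a consistency check, since a vertex adjacent to both extremes admits no consistent label and forces the guess to be rejected. Thus each guess yields a unique candidate partition $A,B,C$, which I would test against the definition of nice graphs: verify that each part is independent, that $G[A\cup B]$, $G[B\cup C]$, $G[C\cup A]$ are $2K_2$-free bipartite, and that no triple $a\in A$, $b\in B$, $c\in C$ induces $K_3$ or $\overline{K}_3$. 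Each such check runs in polynomial time (cubic at worst for condition (b), polynomial for the $2K_2$-free tests).

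Correctness follows from Theorem~\ref{thm:nice} in both directions: if some guess produces a nice partition then $G$ is a 3-letter graph (a word can be recovered via Lemma~\ref{lem:nice}), and conversely if $G$ is a 3-letter graph then, after normalising by symmetry and twin-freeness, the genuine extreme vertices occur among the enumerated guesses and reproduce the actual nice partition. With two cases, $O(n^2)$ guesses per case, and polynomial work per guess, the whole procedure is polynomial. The step I would be most careful about is justifying that the forced labelling (b2)--(b4)/(c2)--(c4) really recovers the partition induced by $\omega$ irrespective of the unknown internal order of the word: this rests on confirming that the first $b$ is adjacent to precisely the $c$-vertices and the last $c$ to precisely the $b$-vertices (and analogously in the $c/c$ case), so that adjacency to the two extremes is a faithful proxy for the label.
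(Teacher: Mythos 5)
Your proposal is correct and takes essentially the same approach as the paper: reduce to a twin-free graph, use the cyclic symmetry to fix the last letter as $c$ and rule out $a$ as the first letter, then in the two remaining cases guess the extreme pair (an adjacent $b$,$c$ pair or a non-adjacent $c$,$c$ pair), read off the forced labelling of all other vertices from their adjacency to the two extremes, and check the resulting partition for niceness via Theorem~\ref{thm:nice}. Your closing observation that the first $b$ is adjacent to exactly the $c$-vertices and the last $c$ to exactly the $b$-vertices is precisely the justification underlying the paper's conditions (b1)--(b4) and (c1)--(c4).
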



\subsection{Minimal forbidden induced subgraphs for 3-letter graphs with the decoder $\{(a,b),(b,c),(c,a)\}$}
\label{sec:induced}

To determine the list of minimal forbidden induced subgraphs for our class,
we will rely on our earlier characterization of graphs in this class as ``nice'' (Theorem~\ref{thm:nice}).
We start with a preparatory result. 

\begin{lemma}\label{joininglemma}
Let $G$ be a graph and let $H_1$ and $H_2$ be nice subgraphs of $G$ with disjoint vertex sets $V(H_1) = A_1\cup B_1\cup C_1$ and $V(H_2) = A_2\cup B_2\cup C_2$. 
If the subgraphs induced 
\begin{itemize}
\item by $A_1\cup B_2$, $B_1\cup C_2$ and $C_1\cup A_2$ are complete bipartite, 
\item by $A_1\cup A_2$, $A_1\cup C_2$, $B_1\cup A_2$, $B_1\cup B_2$, $C_1\cup B_2$ and $C_1\cup C_2$ are edgeless,
\end{itemize}
then the subgraph induced by $V(H_1) \cup V(H_2) = (A_1\cup A_2)\cup (B_1\cup B_2)\cup (C_1\cup C_2)$  is nice.
\end{lemma}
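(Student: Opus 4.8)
The plan is to avoid verifying the defining conditions (a) and (b) of niceness for the union directly, and instead to exploit the word characterisation from Theorem~\ref{thm:nice}. Since $H_1$ and $H_2$ are nice, each is a $3$-letter graph over the cyclic decoder $\{(a,b),(b,c),(c,a)\}$; moreover the construction in the proof of that theorem produces, for each $H_i$, a word $w_i$ in which the vertices of $A_i$ carry the letter $a$, those of $B_i$ the letter $b$, and those of $C_i$ the letter $c$. My goal is to show that the single concatenated word $w = w_1 w_2$ represents the graph induced by $V(H_1)\cup V(H_2)$; niceness of the union then follows at once from the ``only if'' direction of Theorem~\ref{thm:nice}.

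First I would observe that concatenation leaves the two blocks internally unchanged: the restriction of $w$ to the positions of $w_1$ is exactly $w_1$, and likewise for $w_2$, so the letter graph of $w$ induces $H_1$ on $V(H_1)$ and $H_2$ on $V(H_2)$. The entire content of the lemma therefore concerns the cross-edges between $V(H_1)$ and $V(H_2)$. Since every position of $w_1$ precedes every position of $w_2$ in $w$, a vertex $u\in V(H_1)$ and a vertex $v\in V(H_2)$ are adjacent in the letter graph of $w$ precisely when the ordered pair formed by the letter of $u$ followed by the letter of $v$ belongs to the decoder.

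The key step is then a nine-case check over these ordered letter pairs. The three pairs lying in the decoder, namely $(a,b)$, $(b,c)$ and $(c,a)$, create complete bipartite adjacency between $A_1$ and $B_2$, between $B_1$ and $C_2$, and between $C_1$ and $A_2$. The remaining six ordered pairs $(a,a),(a,c),(b,a),(b,b),(c,b),(c,c)$ are absent from the decoder and hence create no cross-edges, leaving the sets $A_1\cup A_2$, $A_1\cup C_2$, $B_1\cup A_2$, $B_1\cup B_2$, $C_1\cup B_2$ and $C_1\cup C_2$ edgeless. These nine prescriptions coincide exactly with the three ``complete bipartite'' hypotheses and the six ``edgeless'' hypotheses of the lemma, so $w$ indeed realises the prescribed cross-adjacencies and therefore represents $G[V(H_1)\cup V(H_2)]$.

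The proof reduces to a verification once the right viewpoint is adopted, so the main obstacle is conceptual rather than computational: one must recognise that the nine hypotheses of the lemma are not arbitrary but are precisely the adjacency pattern obtained by placing one cyclic-decoder word in front of another, and one must ensure that the word furnished by Theorem~\ref{thm:nice} respects the given tripartition $A_i, B_i, C_i$ (so that the letter assignment is the intended one and not a cyclic rotation of it). The only genuine care is to keep the orientation of each pair straight — the decoder is not symmetric, so it is essential that all of $w_1$ precede all of $w_2$ — and to check all nine cases, not merely the three edge-creating ones. A direct alternative that verifies (a) and (b) by hand would instead have to rule out induced $2K_2$'s spanning both blocks together with rainbow $K_3$ and $\overline{K}_3$ triples, which is considerably more laborious; the word-concatenation route sidesteps all of this.
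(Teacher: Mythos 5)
Your proof is correct, but it takes a genuinely different route from the paper. The paper verifies the definition of niceness directly: it shows that $(A_1\cup A_2)\cup(B_1\cup B_2)$ induces a chain graph by arguing that any induced $2K_2$ would have to meet each of the four sets in exactly one vertex, which the complete-bipartite hypothesis on $A_1\cup B_2$ forbids (the other two chain conditions follow by symmetry), and it then rules out rainbow triangles and anti-triangles by reducing, via the cyclic symmetry of the decoder, to the single case $a\in A_1$, $b\in B_1$, $c\in C_2$. Your argument instead goes through the word characterization: you take words $w_1,w_2$ representing $H_1,H_2$, observe that all cross-adjacencies created by the concatenation $w_1w_2$ are governed by the decoder alone (since every position of $w_1$ precedes every position of $w_2$), check that the resulting nine-case cross-adjacency pattern is exactly the lemma's three complete-bipartite plus six edgeless hypotheses, and conclude via the ``only if'' direction of Theorem~\ref{thm:nice}. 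Two remarks. First, there is no circularity: Theorem~\ref{thm:nice} precedes the lemma in the paper, so invoking both of its directions is legitimate. Second, you correctly identify the one point where you need slightly more than the bare statement of Theorem~\ref{thm:nice}: you need the word for $H_i$ to assign letter $a$ to $A_i$, $b$ to $B_i$ and $c$ to $C_i$ (and not a cyclic rotation), which is not asserted in the theorem's statement but is exactly what its proof constructs; flagging and using this is fine, though a fully self-contained write-up should state it as an explicit strengthening of the ``if'' direction. What each approach buys: yours is more conceptual and explains \emph{why} the nine hypotheses are what they are (they are precisely the adjacency pattern of concatenation over the cyclic decoder), and it eliminates the $2K_2$ case analysis entirely; the paper's direct verification is self-contained modulo the definition of nice and does not lean on the internals of an earlier proof.
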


\begin{proof}
By assumption, $A_1\cup A_2$ and $B_1\cup B_2$ are independent sets. Let us show that these two sets induce a chain graph. 
First, it is not difficult to see that $A_1\cup (B_1\cup B_2)$ induces a chain graph, because $G[A_1\cup B_1]$ is a chain graph and 
$G[A_1\cup B_2]$ is complete bipartite. Similar arguments show that $A_2\cup (B_1\cup B_2)$, $(A_1\cup A_2)\cup B_1$ and $(A_1\cup A_2)\cup B_2$ all induce chain graphs. 
Therefore, if the subgraph of $G$ induced by $A_1\cup A_2$ and $B_1\cup B_2$ contains an induced $2K_2$,
then this $2K_2$ contains exactly one vertex in  each of the four sets, which is impossible. 
This contradiction shows that the subgraph of $G$ induced by $A_1 \cup A_2$ and $B_1 \cup B_2$ is a chain graph.

By symmetry, $(B_1\cup B_2)\cup (C_1\cup C_2)$ and $(C_1\cup C_2)\cup (A_1\cup A_2)$ also induce chain graphs. 
It remains to show that no 3 vertices $a \in A_1\cup A_2$, $b \in B_1\cup B_2$, $c \in C_1\cup C_2$ induce a triangle or an anti-triangle. 
Since $H_1$ and $H_2$ are nice, we may assume without loss of generality that two of the vertices belong to $H_1$ and one to $H_2$.
Also, due to the symmetry of the decoder, we may assume that $a \in A_1$, $b \in B_1$, and $c \in C_2$. Then $a,b,c$ induce neither a triangle (since $a$ is not adjacent to $c$)
nor an anti-triangle (since $b$ is adjacent to $c$).
\end{proof}

We are now ready to prove the characterization in terms of minimal forbidden induced subgraphs.

\begin{theorem}\label{mfischar}
A graph $G$ is a 3-letter graph with decoder $\{(a, b), (b, c), (c, a)\}$ if and only if it is $(K_3, 2K_2 + K_1, C_5 + K_1, C_6)$-free.
\end{theorem}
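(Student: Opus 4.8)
The plan is to invoke Theorem~\ref{thm:nice}, which identifies the $3$-letter graphs with the decoder $\{(a,b),(b,c),(c,a)\}$ as exactly the \emph{nice} graphs. Thus it suffices to prove that a graph $G$ is nice if and only if it is $(K_3,2K_2+K_1,C_5+K_1,C_6)$-free, and I would establish the two implications separately, reusing the conditions (a) and (b) from the definition of niceness throughout.

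For necessity, I would check that none of the four graphs is nice. The case of $K_3$ is the only conceptual one: in a nice graph the three classes $A,B,C$ are independent sets, so any triangle would either have two vertices in one class, contradicting independence, or be a rainbow triangle with one vertex in each class, which condition (b) forbids; hence a nice graph is triangle-free and $K_3$ is excluded. The remaining three graphs are small, so I would verify by inspection that no partition of their vertex set into three independent sets satisfies both (a) and (b): for $2K_2+K_1$ and $C_5+K_1$ the isolated vertex, placed in any class, together with a suitable non-adjacent pair from the other component, produces a forbidden rainbow anti-triangle or forces an induced $2K_2$ inside one of the bipartite graphs $G[V_i\cup V_j]$; and for $C_6$ every tripartition into independent sets leaves an induced $2K_2$ inside one such pair.

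For sufficiency I would argue by induction on $|V(G)|$, after reducing to the twin-free case: as observed in Section~\ref{sec:nice}, a twin can always be added to a representing word, so niceness is preserved under adding twins and I may assume $G$ is twin-free (in particular $G$ has at most one isolated vertex). If $G$ is disconnected, $2K_2+K_1$-freeness forces at most two non-trivial components, and together with triangle-freeness and twin-freeness this pins down the structure (a chain graph together with at most one isolated vertex, or two disjoint edges); in each instance I would exhibit an explicit nice tripartition by placing the two sides of a chain graph into $A$ and $B$, absorbing any isolated vertex, and, in the $2K_2$ case, placing one endpoint of each edge into the middle class $B$ so that both (a) and (b) hold. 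If $G$ is connected and $2K_2$-free, it is either a chain graph or $C_5$ (triangle-freeness, $C_5+K_1$-freeness and twin-freeness exclude all other non-bipartite graphs), and both are nice.

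This leaves the crux: $G$ connected and containing an induced $2K_2$, which by $2K_2+K_1$-freeness is dominating. I would first show $G$ is bipartite --- triangle-freeness makes any shortest odd cycle an induced $C_5$, and in the presence of a dominating $2K_2$ such a $C_5$ is ruled out by $C_6$- and $C_5+K_1$-freeness. Writing the bipartition as $(P,Q)$, the aim is to place all of $P$ into one class and split $Q=B\cup C$ so that $G[P\cup B]$ and $G[P\cup C]$ are chain graphs (condition (a)) while no vertex of $P$ is simultaneously non-adjacent to a vertex of $B$ and of $C$ (condition (b)). The first requirement asks that $B$ and $C$ each be chains in the inclusion order of $Q$-neighbourhoods, which is possible precisely when this order has width at most two; here $2K_2+K_1$-freeness together with $C_6$-freeness is what forbids a three-element antichain (a private system of neighbours in such an antichain would yield an induced $2K_2+K_1$, and the remaining configurations, in which neighbours are shared, are exactly those closed off by an induced $C_6$). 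Realising the two requirements simultaneously, and assembling the resulting pieces via the joining lemma (Lemma~\ref{joininglemma}), is the main obstacle; the essential and repeatedly used ingredient is that $C_6$-freeness bounds the width of the neighbourhood order, which is what makes a valid cyclic three-colouring of $G$ possible.
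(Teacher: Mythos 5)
Your reduction to niceness via Theorem~\ref{thm:nice}, the necessity check, and the two easy cases (disconnected; connected and $2K_2$-free, where twin-freeness collapses the blow-up of $C_5$ to $C_5$ itself) are all sound. The fatal problem is in your crux case: the claim that a connected graph in the class containing an induced $2K_2$ must be bipartite is false. Take $C_5$ with vertices $v_1,\ldots,v_5$ and add a pendant vertex $x$ adjacent only to $v_1$. This graph is connected, non-bipartite, and contains the induced $2K_2$ on $\{x,v_1,v_3,v_4\}$; yet it is $(K_3,2K_2+K_1,C_5+K_1,C_6)$-free, since the only induced $2K_2$ is $\{xv_1,v_3v_4\}$ and both remaining vertices $v_2,v_5$ have neighbours in it, the only induced $C_5$ dominates $x$, and there is no induced $C_6$. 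Indeed this graph lies in the class: it is the letter graph of the word $abcbab$ (reading $v_1,v_2,v_3,x,v_4,v_5$, so $v_1,v_4\mapsto a$, $v_2,x,v_5\mapsto b$, $v_3\mapsto c$). So your plan for this case --- take a bipartition $(P,Q)$, put $P$ into one class, split $Q$ into the other two --- starts from a false premise and cannot handle such graphs: triangle-freeness plus a dominating $2K_2$ does not exclude induced copies of $C_5$.

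Even within the genuinely bipartite sub-case, your argument is not yet a proof: you yourself flag that realising conditions (a) and (b) simultaneously and assembling the pieces via Lemma~\ref{joininglemma} is \emph{the main obstacle}, i.e.\ the key step is left unresolved, and the width-two claim you want to extract from $C_6$-freeness is only sketched. For contrast, the paper's proof of this case makes no bipartiteness assumption at all: it fixes one edge $uw$ of the induced $2K_2$, partitions $V(G)$ into the neighbours $U$ of $w$, the neighbours $W$ of $u$, and the rest $X$ (which is complete bipartite, $X=C_1\cup A_2$, by $(K_2+K_1,K_3)$-freeness), further splits $U$ and $W$ according to attachment to $C_1$ and $A_2$, uses each of the four forbidden graphs to force the cross-adjacencies between the resulting six sets to be complete or empty, and finally applies Lemma~\ref{joininglemma} to the two nice halves $G[A_1\cup B_1\cup C_1]$ and $G[A_2\cup B_2\cup C_2]$. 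Rebuilding your argument along those lines --- a decomposition anchored at the $2K_2$ rather than at a bipartition --- is what is needed to close the gap.
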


\begin{proof}
For the ``only if'' direction, it is straightforward to check that none of the four graphs in our list is nice and that they are minimal with that property. 
For the  ``if'' direction, we split the analysis into two cases.

\medskip
Assume first that $G$ is $2K_2$-free. If, in addition, it is $C_5$-free, then $G$ is $2K_2$-free bipartite, i.e. a chain graph 
(since it has no $2K_2$, $K_3$, $C_5$, and the absence of $2K_2$ forbids longer odd cycles), 
hence it is nice, with one of the 3 sets being empty. So suppose $G$ has an induced $C_5$. Label its vertices clockwise by $v_1, \ldots, v_5$ 
(whenever indices are added in this proof, the addition will be modulo 5). 
Any vertex of $G$ not in the $C_5$
\begin{itemize}
\item has to be adjacent to at least one vertex in the $C_5$, since otherwise an induced  $C_5 + K_1$ arises, 
\item cannot have a single neighbour in the $C_5$, since otherwise an induced $2K_2$ can be easily found, 
\item cannot be adjacent to 3 or more vertices or to 2 consecutive vertices in the $C_5$, since $G$ is $K_3$-free.
\end{itemize}
Hence the vertices of $G$ can be partitioned into 5 sets $V_1, \ldots, V_5$ such that the vertices in $V_i$ are adjacent to only $v_{i-1}$ and $v_{i+1}$ in the $C_5$ (note $v_i \in V_i$ for $i = 1,\ldots, 5$). 
Each $V_i$ is an independent set (since they share a common neighbour, and triangles are forbidden), and adjacency between them is easy to determine: 
\begin{itemize}
\item if $u_i \in V_i$, $u_{i+1} \in V_{i+1}$, then $u_i$ and $u_{i+1}$ are adjacent, since otherwise $u_i$, $v_{i-1}, u_{i+1}, v_{i+2}$ induce a $2K_2$, 
\item if $u_i \in V_i$, $u_{i+2} \in V_{i+2}$, then $u_i$ and $u_{i+2}$ are non-adjacent, since otherwise $u_i,v_{i+1},u_{i+2}$ induce a triangle.
\end{itemize} 
This determines all adjacencies in $G$, and it is easy to check that $G$ is nice, e.g. with partition $(V_1\cup V_4), (V_2\cup V_5), V_3$.

\medskip
Now we turn to the case when $G$ contains an induced $2K_2$. 
We denote one of the edges of the $2K_2$ by $uw$ and partition the vertices of $G$ into three subsets as follows 
(observe that there are no vertices adjacent to both $u$ and $w$, since triangles are forbidden):
\begin{itemize}
\item[$U$] is the set of vertices adjacent to $w$ ($u$ belongs to $U$). Since triangles are forbidden, $U$ is an independent set.
\item[$W$] is the set of vertices adjacent to $u$ ($w$ belongs to $W$). Since triangles are forbidden, $W$ is an independent set.
\item[$X$] is the set of vertices adjacent neither to $u$ nor to $w$. The subgraph induced by $X$ must be $K_2 + K_1$-free, since otherwise an induced copy of $2K_2 + K_1$ arise.
It is not difficult to see that $(K_2 + K_1, K_3)$-free graphs are precisely complete bipartite graphs. Therefore, the vertices  of $X$ can be split into two independent sets
with all possible edges between them. We call these independent sets  $C_1$ and $A_2$ (this notation is chosen for consistency with Lemma~\ref{joininglemma})
and observe that each of them is non-empty, because $X$ contains the other edge of the $K_2$. 
\end{itemize}
Since $G$ is $K_3$-free, no vertex of $G$ can have neighbours in both $C_1$ and $A_2$. Thus $W$ can be partitioned into three subsets as follows:
\begin{itemize}
\item[$A_1$] is the vertices of $W$ that do have neighbours in $C_1$ (and hence have no neighbours in $A_2$),
\item[$C_2$] is the vertices of $W$ that do have neighbours in $A_2$ (and hence have no neighbours in $C_1$),
\item[$W'$] is the set of remaining vertices of $W$, i.e. those that have neighbours neither in $C_1$ nor in $A_2$.
\end{itemize}
We partition $U$ into three subsets in a similar way:
\begin{itemize}
\item[$B_1$] is the vertices of $U$ that do have neighbours in $C_1$ (and hence have no neighbours in $A_2$),
\item[$B_2$] is the vertices of $U$ that do have neighbours in $A_2$ (and hence have no neighbours in $C_1$),
\item[$U'$] is the set of remaining vertices of $U$, i.e. those that have neighbours neither in $C_1$ nor in $A_2$.
\end{itemize}
We note that 
\begin{itemize}
\item {\it Every vertex of $A_1$ is adjacent to every vertex of $B_2$}. Indeed, if  $a_1 \in A_1$ is not adjacent to  $b_2 \in B_2$, 
then $u,w,a_1,b_2$ together with a neighbour of $a_1$ in $C_1$ and a neighbour of $b_2$ in $A_2$ induce a  $C_6$. 
\item {\it Every vertex of $B_1$ is adjacent to every vertex of $C_2$} by similar arguments.
\item {\it Every vertex of $U'$ is adjacent to every vertex of $W'$}. Indeed, if $u'\in U'$ is not adjacent to $w'\in W'$, 
then $u',w',w$ together with any two vertices $c_1\in C_1$ and $a_2\in A_2$ induce a $2K_2+K_1$.
\item {\it Every vertex of $W'$ is adjacent to either every vertex in $B_1$ or to every vertex in $B_2$}. Indeed, if a vertex $w'\in W'$ has 
a non-neighbour $b_1\in B_1$ and a non-neighbour $b_2\in B_2$, then  $w',w,b_1,b_2$ together with a neighbour of $b_1$ in $C_1$ and a neighbour of $b_2$ in $A_2$ induce a  $C_5+K_1$. 
\item {\it Every vertex of $U'$ is adjacent to every vertex either in $A_1$ or in $C_2$} by similar arguments.
\end{itemize}
The above sequence of claims shows that we can move the vertices from $W'$ to either $A_1$ or $C_2$ and those from $U'$ to either $B_1$ or $B_2$
in such a way that the two subgraphs $G[A_1\cup B_2]$ and $G[B_1\cup C_2]$ are complete bipartite.

To sum up, we have partitioned $G$ into independent sets $A_1, A_2, B_1, B_2, C_1$, and $C_2$, 
such that $G[A_1\cup B_2]$, $G[B_1\cup C_2]$ and $G[C_1\cup A_2]$ are complete bipartite, 
while $G[A_1\cup A_2]$, $G[A_1\cup C_2]$, $G[B_1\cup A_2]$, $G[B_1\cup B_2]$, $G[C_1\cup B_2]$ and $G[C_1\cup C_2]$ are edgeless. 
To apply Lemma~\ref{joininglemma} it remains to show that $G[A_1\cup B_1\cup C_1]$ and $G[A_2\cup B_2\cup C_2]$ are nice.

Because of the $2K_2+K_1$-freeness, the subgraph induced by the set of non-neighbours of any vertex is $2K_2$-free. 
Therefore, each of $G[A_1\cup B_1]$, $G[B_1\cup C_1]$ and $G[C_1\cup A_1]$ is $2K_2$-free, since they are induced by non-neighbours of $a_2, u, w$, respectively 
(where $a_2$ is an arbitrary  vertex in $A_2$, which exists because $A_2$ is not empty). 
We do not need to worry about triangles, since they are forbidden anyway. Finally, if there was an anti-triangle induced by $a_1\in A_1, b_1\in B_1, c_1\in C_1$, 
then together with $u$ and any vertex $a_2 \in A_2$ they would induce a  $2K_2 + K_1$. 
This shows that $G[A_1\cup B_1\cup C_1]$ is nice. The other subgraph is treated analogously.
Therefore, by Lemma~\ref{joininglemma} $G$ is nice.
\end{proof}

\section{Concluding remarks and open problems}
\label{sec:conclusion}
The notion of letter graphs studied in this paper is relatively new. To better understand its relation to the existing notions, 
we discuss in Section~\ref{sec:para} the position of  graph lettericity in the hierarchy of other graph parameters.
Then we conclude the paper in Section~\ref{sec:open} with a number of open problems.

\subsection{Graph lettericity in the hierarchy of graph parameters}
\label{sec:para}

In this section  we show that lettericity is squeezed between neighbourhood diversity and linear clique-width 
in the sense that bounded neighbourhood diversity implies bounded lettericity, which in turn implies bounded linear clique-width. 

\medskip
We start with the neighbourhood diversity. This parameter was introduced in \cite{Lampis} to study parameterized complexity of algorithmic graph problems
and can be defined as follows.

\begin{definition}
Two vertices $x$ and $y$ are said to be {\it similar} if there is no third vertex $z$ distinguishing them
(i.e. if there is no third vertex $z$ adjacent to exactly one of $x$ and $y$). Clearly, the similarity is an equivalence relation. 
The number of similarity classes is the {\it neighbourhood diversity} of $G$.
\end{definition}

\begin{theorem}
If the neighbourhood diversity of $G$ is $k$, then the lettericity of $G$ is at most $k$. 
\end{theorem}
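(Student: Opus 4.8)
The plan is to verify the two conditions of Theorem~\ref{thm:k-letter} directly, using the partition of $V(G)$ into its $k$ similarity classes as the desired partition. First I would record the basic structural consequences of the similarity relation. If $x$ and $y$ are similar, then by definition every third vertex $z$ is adjacent to both or to neither of them, i.e. $N(x)\setminus\{y\}=N(y)\setminus\{x\}$. Applying this inside a single similarity class to any three of its vertices $x,y,z$, the three resulting equivalences force the presence of the edges $xy$, $xz$, $yz$ to coincide; propagating this over the whole class shows that each similarity class is either a clique or an independent set. Taking $p=k$, this is exactly condition~$1$ of Theorem~\ref{thm:k-letter}.

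Next I would analyse the adjacencies between two distinct classes $V_i$ and $V_j$. Fixing a vertex $y\in V_j$ and using that any two vertices of $V_i$ are similar (so $y$ cannot distinguish them), I get that all vertices of $V_i$ have the same adjacency status to $y$; the symmetric argument shows this status is independent of the choice of $y\in V_j$ as well. Hence the bipartite graph between $V_i$ and $V_j$ is either complete or edgeless.

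With these two facts in hand, condition~$2$ of Theorem~\ref{thm:k-letter} becomes immediate: I would take an \emph{arbitrary} linear order $L$ of $V(G)$ and observe that for every pair of distinct indices $i,j$ the set $E(G)\cap(V_i\times V_j)$ equals either all of $V_i\times V_j$ (type~(c)) or $\emptyset$ (type~(d)), so the requirement holds regardless of $L$. By Theorem~\ref{thm:k-letter}, $G$ is then a $k$-letter graph, that is, $\ell(G)\le k$.

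I do not expect a genuine obstacle: the whole content is the structural observation that neighbourhood-diversity classes are homogeneous both internally and pairwise, after which the ``monotone'' types~(a) and~(b) of Theorem~\ref{thm:k-letter} are never needed. If one preferred an explicit construction to quoting Theorem~\ref{thm:k-letter}, the same facts yield a decoder directly: label class $V_i$ by a letter $\alpha_i$, put $(\alpha_i,\alpha_i)\in\mathcal{P}$ exactly when $V_i$ is a clique, and for $i\ne j$ put \emph{both} ordered pairs $(\alpha_i,\alpha_j),(\alpha_j,\alpha_i)$ in $\mathcal{P}$ when the two classes are complete to each other and neither pair otherwise. Any word over these $k$ letters that respects the class labels then has letter graph $G$; the only mild care required is to include both directions of a cross-class pair, since a fixed word reads each pair of positions in only one order.
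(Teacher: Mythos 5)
Your proof is correct and takes essentially the same approach as the paper: both rest on the two observations that each similarity class is a clique or an independent set and that any two classes are either complete to each other or have no edges between them. In fact, the paper's proof is precisely your closing alternative --- it skips Theorem~\ref{thm:k-letter} and directly builds the decoder (include $(a,a)$ exactly when the class of $a$ is a clique, and both cross pairs exactly when two classes are complete to each other), noting that \emph{any} word over the $k$ letters then represents $G$; your primary routing through Theorem~\ref{thm:k-letter} is only a cosmetic repackaging of the same content.
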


\begin{proof}
It is not difficult to see that every similarity class in a graph is either a clique or an independent set
and between any two similarity classes we have either all possible edges or none of them. Therefore,  
if the neighbourhood diversity of $G$ is $k$, then we need at most $k$ letters to represent $G$ as a $k$-letter graph (one letter per similarity class). 
If a similarity class corresponding to letter $a$ is a clique, we include the pair $(a,a)$ in the decoder, otherwise we do not. 
Also, if two similarity classes corresponding to letters $a$ and $b$ are complete to each other (all possible edges between them),
we include both pairs $(a,b)$ and $(b,a)$ in the decoder, otherwise we include neither of them. With the decoder constructed in this way, 
{\it any} word over the alphabet of $k$ letters represents $G$.  
\end{proof}

\medskip
Now we turn to linear clique-width. This is a restriction of a more general parameter clique-width. The {\em clique-width}
of a graph $G$
is the minimum number of labels needed to construct $G$ using
the following four operations:
\begin{itemize}
\item[(i)] Creation of a new vertex $v$ with label $i$ (denoted by $i(v)$).
\item[(ii)] Disjoint union of two labeled graphs $G$ and $H$
(denoted by $G\oplus H$).
\item[(iii)] Joining by an edge each vertex with label $i$ to each vertex with label $j$
($i\not= j$, denoted by $\eta_{i,j}$).
\item[(iv)] Renaming label $i$ to $j$
(denoted by $\rho_{i\to j}$).

\end{itemize}
Every graph can be defined by an algebraic expression using the four operations above. 
This expression is called a $k$-expression if it uses $k$ different labels.
For instance, the cycle $C_5$ on vertices $a,b,c,d,e$ 
(listed along the cycle) can be defined by the following 4-expression:
$$
\eta_{4,1}(\eta_{4,3}(4(e)\oplus\rho_{4\to 3}(\rho_{3\to 2}(\eta_{4,3}(4(d)\oplus\eta_{3,2}(3(c)\oplus\eta_{2,1}(2(b)\oplus 1(a)))))))).
$$

Alternatively, any algebraic expression defining $G$ can be represented as a rooted tree, 
whose leaves correspond to the operations of vertex creation, the internal nodes correspond 
to the $\oplus$-operations, and the root is associated with $G$. The operations $\eta$ and 
$\rho$ are assigned to the respective edges of the tree. Figure~\ref{fig:tree} shows the tree 
representing the above expression defining a $C_5$. 

\bigskip
\begin{figure}[ht]
\begin{center}
\setlength{\unitlength}{0.32mm}
\begin{picture}(370,50)
\put(15,50){\circle{20}}
\put(90,50){\circle{20}}
\put(165,50){\circle{20}}
\put(240,50){\circle{20}}
\put(315,50){\circle{20}}
\put(355,50){\circle{20}}
\put(90,10){\circle{20}}
\put(165,10){\circle{20}}
\put(240,10){\circle{20}}
\put(315,10){\circle{20}}
\put(25,50){\line(1,0){55}}
\put(100,50){\line(1,0){55}}
\put(175,50){\line(1,0){55}}
\put(250,50){\line(1,0){55}}
\put(325,50){\line(1,0){20}}
\put(90,40){\line(0,-1){20}}
\put(165,40){\line(0,-1){20}}
\put(240,40){\line(0,-1){20}}
\put(315,40){\line(0,-1){20}}
\put(85,47){+}
\put(160,47){+}
\put(235,47){+}
\put(311,47){+}
\put(8,47){$C_5$}
\put(83,10){$_{4(e)}$}
\put(158,10){$_{4(d)}$}
\put(233,10){$_{3(c)}$}
\put(308,10){$_{2(b)}$}
\put(348,50){$_{1(a)}$}
\put(101,55){$_{\rho_{4\to 3}\rho_{3\to 2}\eta_{4,3}}$}
\put(36,55){$_{\eta_{4,1}\eta_{4,3}}$}
\put(196,55){$_{\eta_{3,2}}$}
\put(271,55){$_{\eta_{2,1}}$}
\end{picture}
\end{center}
\caption{The tree representing the expression defining a $C_5$}
\label{fig:tree}
\end{figure}
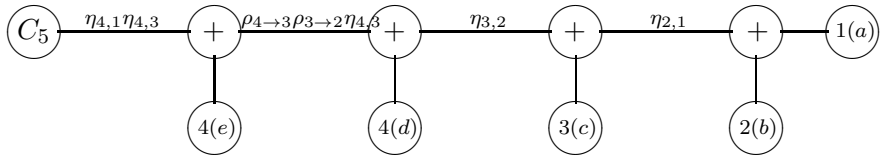

Let us observe that the tree in Figure~\ref{fig:tree} has a special form known as a {\it caterpillar
tree} (that is, a tree that becomes a path 
after the removal of vertices of degree 1). The minimum number of labels needed to construct a graph $G$ by means of caterpillar trees is 
called the {\it linear clique-width} of $G$ and is denoted ${\rm lcwd}(G)$. Clearly, ${\rm lcwd}(G) \ge {\rm cwd}(G)$

\begin{theorem}
${\rm lcwd}(G)\le \ell(G) +1$.
\end{theorem}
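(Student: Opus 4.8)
The plan is to convert a letter representation of $G$ directly into a caterpillar $(k+1)$-expression, where $k=\ell(G)$. Fix an alphabet $\Sigma=\{\sigma_1,\ldots,\sigma_k\}$, a decoder $\mathcal{P}\subseteq\Sigma^2$, and a word $w=w_1\cdots w_n$ with $G=G(\mathcal{P},w)$. I would use $k$ labels $1,\ldots,k$, one per symbol of $\Sigma$ (so label $\mathrm{ind}(\sigma)$ marks exactly the vertices whose letter is $\sigma$), together with one auxiliary label $k+1$ reserved for the vertex currently being inserted. This accounts for the total of $k+1$ labels.

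The construction processes the vertices $1,2,\ldots,n$ in the order they occur in $w$, maintaining the invariant that after vertex $j$ has been inserted, the constructed graph equals $G[\{1,\ldots,j\}]$ and every vertex $i\le j$ carries the label $\mathrm{ind}(w_i)$. To insert vertex $j+1$ with letter $a:=w_{j+1}$, I would create it with the fresh label as $(k+1)(v)$ and take the disjoint union with the current graph; then, for each symbol $\sigma$ with $(\sigma,a)\in\mathcal{P}$, apply $\eta_{\mathrm{ind}(\sigma),k+1}$; and finally apply $\rho_{k+1\to\mathrm{ind}(a)}$ to restore the invariant.

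The key step is verifying the adjacencies. Since every previously inserted vertex $i$ has index $i<j+1$, the letter-graph rule makes $i$ adjacent to $j+1$ exactly when $(w_i,w_{j+1})\in\mathcal{P}$; the join operations above connect $j+1$ precisely to those old vertices whose label $\mathrm{ind}(\sigma)$ satisfies $(\sigma,a)\in\mathcal{P}$, i.e.\ to exactly the required neighbours. A single label per symbol suffices for the old vertices because two vertices sharing a letter $\sigma$ have identical adjacency to every later vertex, so merging their labels never introduces an error; the one place where the symbols alone do not suffice is that a newly inserted $a$-vertex may need to be joined to earlier $a$-vertices (when $(a,a)\in\mathcal{P}$), which forces the auxiliary label $k+1$ and explains the ``$+1$''.

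Finally I would observe that the resulting expression is a caterpillar: the $\oplus$-nodes form a single spine (each disjoint union combines the accumulated graph with one freshly created leaf), so deleting the degree-one leaves leaves a path, with the $\eta$- and $\rho$-operations all sitting on the spine edges. Hence $G$ is built by a caterpillar $(k+1)$-expression, giving ${\rm lcwd}(G)\le k+1=\ell(G)+1$. I do not expect a serious obstacle here; the only points requiring care are the bookkeeping that old vertices retain their symbol-labels throughout the process, and the verification that the constructed tree is genuinely a caterpillar.
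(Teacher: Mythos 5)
Your proposal is correct and follows essentially the same construction as the paper: create each new vertex with a reserved auxiliary label, join it via $\eta$-operations to the label classes dictated by the decoder, then rename it to the label of its letter, yielding a caterpillar $(k+1)$-expression. Your verification of adjacencies and of the caterpillar shape is just a more explicit write-up of what the paper leaves as ``not difficult to see.''
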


\begin{proof}
Let $w=w_1,w_2,\ldots,w_n$ be a word defining a graph $G$ with vertex set $\{v_1,\ldots,v_n\}$ over an alphabet $\Sigma=\{a_1,\ldots,a_k\}$ of $k\le\ell(G)$ letters with a decoder $\cal P$. 
To construct a linear clique-width expression for $G$ we will use $k+1$ labels  $a_0,a_1,\ldots,a_k$ as follows.
Assume the subgraph of $G$ induced by the first $i-1$ vertices has been constructed in such a way that in the end of the construction the label of $v_j$ is $w_j$ for each $j=1,\ldots,i-1$. 
A new vertex $v_i$ is created with label $a_0$. Then for each $j=1,\ldots,i-1$ we connect $w_j$ to $a_0$ whenever $(w_j,w_i)\in \cal P$ and rename $a_0$ to $w_i$. 
It is not difficult to see that this procedure creates the graph $G$. 
\end{proof}

\subsection{Open problems}
\label{sec:open}

In this paper, we revealed a relationship between  letter graphs and geometrically griddable permutations. 
We also gave a partial description of this relationship. 
However, describing the relationship in its whole generality remains an open problem. 

One more open problem is the development of constructive algorithms for the recognition of $k$-letter graphs.
For $k=2$, a solution to this problem follows from the results in \cite{letter-graphs}, where the author 
gave a complete characterization of $2$-letter graphs for each possible decoder. 
This naturally leads to a quadratic algorithm to recognize the $2$-letter graphs.
In the present paper, we studied $3$-letter graphs  representable over a specific decoder
and characterized this class both structurally and in terms of minimal forbidden induced subgraphs. 
As a result, we obtained a polynomial-time algorithm for the recognition of graphs in this class. 
Similar ideas can be used for the recognition of $3$-letter graphs with other decoders. 
However, a more challenging task is the development of constructive algorithms independent of the decoders. 

It is also remains open whether the lettericity of a graph can be computed in polynomial time. 
Note that for the related parameter linear clique-width this problem is NP-complete \cite{cw}. 


\end{document}